\theoremstyle{plain}
\newtheorem{theo}{Theorem}[section]
\newtheorem{prop}[theo]{Proposition}
\theoremstyle{definition}
\newtheorem{example}[theo]{Example}
\newtheorem{definition}[theo]{Definition}
\newenvironment{pf}{\noindent{\it Proof. }}{$\square$\par\medskip}
\newenvironment{pfns}{\noindent{\it Proof. }}{\par\medskip}
\theoremstyle{plain}
\newtheorem{lemma}[theo]{Lemma}
\newtheorem{theorem}[theo]{Theorem}
\newtheorem{proposition}[theo]{Proposition}
\theoremstyle{definition}
\newtheorem{remark}[theo]{Remark}
\renewcommand{\=}{:=}
\newcommand{\beq}{\begin{equation}}
\newcommand{\eeq}{\end{equation}}
\renewcommand{\a}{\alpha}
\renewcommand{\b}{\beta}
\renewcommand{\d}{\delta}
\renewcommand{\l}{\lambda}
\newcommand{\m}{\mu}
\renewcommand{\o}{\omega}
\newcommand{\D}{\Delta}
\newcommand{\bC}{\mathbb{C}}
\newcommand{\bR}{\mathbb{R}}
\newcommand{\bH}{\mathbb{H}}
\newcommand{\bN}{\mathbb{N}}
\newcommand{\bS}{\mathbb{S}}
\renewcommand{\gg}{\mathfrak{g}}
\newcommand{\gk}{\mathfrak{k}}
\newcommand{\gm}{\mathfrak{m}}
\newcommand{\gX}{\mathfrak{X}}
\newcommand{\su}{\mathfrak{su}}
\newcommand\SU{\mathrm{SU}}
\newcommand\U{\mathrm{U}}
\newcommand{\cA}{\mathcal{A}}
\newcommand{\cB}{\mathcal{B}}
\newcommand{\cC}{\mathcal{C}}
\newcommand{\cD}{\mathcal{D}}
\newcommand{\cE}{\mathcal{E}}
\newcommand{\cF}{\mathcal{F}}
\newcommand{\cL}{\mathcal{L}}
\newcommand{\cM}{\mathcal{M}}
\newcommand{\cO}{\mathcal{O}}
\newcommand{\cP}{\mathcal{P}}
\newcommand{\cR}{\mathcal{R}}
\newcommand{\cS}{\mathcal{S}}
\newcommand{\cU}{\mathcal{U}}
\newcommand{\cV}{\mathcal{V}}
\newcommand{\cZ}{\mathcal{Z}}
\renewcommand{\square}{\kern1pt\vbox
{\hrule height 0.6pt\hbox{\vrule width 0.6pt\hskip 3pt
\vbox{\vskip 6pt}\hskip 3pt\vrule width 0.6pt}\hrule height0.6pt}\kern1pt}
\DeclareMathOperator\Tr{Tr}
\DeclareMathOperator\vol{vol}
\DeclareMathOperator\Id{Id}
\renewcommand\Im{\operatorname{Im}}
\newcommand{\Hom}{{\operatorname{Hom}}}
\newcommand{\wt}{\widetilde}
\newcommand{\n}{\nabla}
\newcommand{\be}{\begin{equation}}
\newcommand{\ee}{\end{equation}}
\def\<#1,#2>{\langle\,#1,\,#2\,\rangle}
\newcommand{\arr}{\begin{array}{rlll}}
\newcommand{\ea}{\end{array}}
\newcommand{\bea}{\begin{eqnarray}}
\newcommand{\eea}{\end{eqnarray}}
\newcommand{\bean}{\begin{eqnarray*}}
\newcommand{\eean}{\end{eqnarray*}}
\def\sideremark#1{\ifvmode\leavevmode\fi\vadjust{
\vbox to0pt{\hbox to 0pt{\hskip\hsize\hskip1em
\vbox{\hsize3cm\tiny\raggedright\pretolerance10000
\noindent #1\hfill}\hss}\vbox to8pt{\vfil}\vss}}}
\newcounter{ssig}
\newcounter{ttig}
\newcommand{\Ric}{\operatorname{Ric}}
\newcommand{\Hess}{\operatorname{Hess}}
\newcommand{\R}{\operatorname{R}}
\newcommand{\DD}{\operatorname{D}\!}
\newcommand{\Hs}{\operatorname{H}^s}
\newcommand{\sob}[1]{\operatorname{H}^{#1}}
\newcommand{\Sol}{\cS ol}
\newcommand{\Iso}{\operatorname{Iso}}
\title[On  moduli spaces of Ricci solitons] {On moduli spaces of Ricci solitons}
\author{Fabio Podest\`a and Andrea Spiro}
\subjclass[2010]{53C25, 53C21}
\keywords{Ricci soliton, Ebin slice,  space of Riemannian metrics}
\begin{document}
\begin{abstract} We study deformations of shrinking Ricci solitons on a compact manifold $M$, generalising the classical theory of deformations of Einstein metrics. Using appropriate notions of twisted slices $S_f$ inside the space of all Riemannian metrics on $M$, we define the infinitesimal solitonic deformations and the local solitonic pre-moduli spaces. We prove the existence of a finite dimensional submanifold of $S_f\times \cC^\infty(M)$, which contains the pre-moduli space of solitons around a fixed shrinking Ricci soliton as an analytic subset. We define solitonic rigidity and give criteria which imply it.
\end{abstract}

\maketitle
\section{Introduction}
In this paper we study Ricci solitons on  manifolds  setting up the theory of their deformations. We recall that a Ricci soliton on a manifold $M$ is a complete Riemannian metric $g$, with  Ricci tensor   satisfying the equation  $\Ric + \frac 12 \cL_Xg - cg =0$ for some complete vector field $X$ and a constant $c$. Solitons are remarkably  interesting metrics as they evolve in a particularly easy way under the Ricci flow, namely by diffeomorphisms and homotheties, and they  appear as natural generalisations of Einstein metrics,  which are the  Ricci solitons  with  $X = 0$. The reader is referred to \cite{Ca1, Ch, ELM} for a detailed exposition of the theory of Ricci solitons.\par
\smallskip
It is known that on a compact manifold $M$ every Ricci soliton is of gradient type, i.e.,  with  associated vector field equal to   the gradient $X = \n f$ of a smooth function $f$. It is then natural to consider the space
$$\cP =\left\{ (g,f)\in \cM \times \cC^\infty(M)\ |\ \int_Me^{-f}\mu_g = (2\pi)^{\frac n2}\right\}\ ,$$
 where $\cM$ is the space of all Riemannian metrics on the $n$-dimensional manifold $M$,  and to define the set of  {\it normalised Ricci solitons} $\Sol$ as the set of all pairs $(g,f)\in \cP$  satisfying the normalised equation
$$\Ric + \Hess(f) - g = 0\ .$$
We remark that   $\Sol$ coincides with the set of all critical points in $\cP$ of Perelman's entropy functional $W(g,f)$ or, equivalently,  with  the zero set of the corresponding Euler-Lagrangian operator $\bS\! =\! (\bS_1, \bS_2)\!:\!\cP \longrightarrow \cC^\infty(S^2(T^*M))\times \cC^\infty(M)$
 (see \eqref{ELequations}).
 If we compare all this to the classical theory of Einstein metrics (which can be considered as the critical  points  of  the total scalar curvature $H(g)$, in the space $\cM_1$ of Riemannian metrics of unitary volume) we have that   Perelman's functional $W(g,f)$ and  the equations $\bS_1(g, f) = \bS_2(g,f) = 0$ are non trivial analogues of $H(g)$ and  Einstein equation, respectively, that are well
 suited to our purposes. \par
 \smallskip
The aim of this paper is to study the moduli space $\Sol/\cD$ of  solitons w.r.t. the natural action of the set  $\cD$  of all diffeomorphisms of $M$, around a fixed normalised Ricci soliton $(g_o,f_o)$. In order to do this, we first define and prove the existence of  modified Ebin slices, called {\it  $f$-twisted slices $\cS_f$}, for the $\cD$-action on the space $\cM$. We then define the {\it solitonic  pre-moduli space} at $(g_o,f_o)$ as the intersection of $\Sol$ with the set $\cS_{f_o}\times \cC^\infty(M)$. Such solitonic pre-moduli space is obviously acted on by the isometry group $\mbox{Iso}(g_o)$ and the orbit space of this action is a local model
for the moduli space $\Sol/\cD$ around the class of $(g_o,f_o)$.\par
\smallskip
The space of {\it infinitesimal solitonic deformations} is then defined as the linear subspace of the tangent space
$T_{(g_o,f_o)}\cP$
  given by the kernel of the differential of $\bS$ at $(g_o,f_o)$. An element $(h,a)\in T_{(g_o,f_o)}\cP\simeq \cC^\infty(S^2(T^* M)) \times \cC^\infty(M)\ $ in such a linear subspace is called {\it essential} if the tensor field $h$
  belongs to the tangent space of the $f_o$-twisted slice $S_{f_o}$ at $g_o$. Our first main result states that {\it the space of all essential infinitesimal solitonic deformations is finite dimensional}.\par
  \smallskip

In order to provide a description of the solitonic pre-moduli space,   we work in the Hilbert manifold $\cP^s$ ($s\geq [n/2]+3$) given by
$$\cP^s =\left\{\ (g,f)\in \cM^s \times H^s(M)\ |\ \int_Me^{-f}\mu_g = (2\pi)^{\frac n2}\ \right\}\ ,$$ where $H^s(M)$ is the Sobolev space of order $s$ and $\cM^s$ denotes the space of all Riemannian metrics which are $H^s$-sections of the bundle $S^2(T^*M)$. This setting is convenient because it allows  the use of the Implicit Function Theorem and brings to our main result, {\it which consists in  a local description of the pre-moduli space as an analytic subset of a finite dimensional analytic submanifold $\cZ^s \subset \cP^s$  through $(g_o,f_o)$,  whose tangent space is given by  the set of essential infinitesimal solitonic deformations}.
\par
\smallskip
We then introduce the notion of ({\it weak) solitonic rigidity} and  provide some criteria that imply it, in particular when the fixed metric $g_o$ is Einstein. We analyse the case of compact symmetric spaces of rank one, showing that  the complex projective spaces $\bC P^n$ are the only ones with non trivial spaces of essential infinitesimal solitonic deformations. Other general results on  solitonic rigidity are given, when the curvature is   sufficiently positive or the metric $g_o$ is K\"ahler.
\par
\smallskip
We conclude observing that these  results appear to be the  natural analogues of the outcomes of the  theory of deformations of Einstein metrics,  developed by Koiso (\cite{Ko, Ko1, Ko2, Be}), albeit
they are based on  various  non trivial  modifications of Koiso's settings and techniques.
\par
\smallskip
The structure of the paper  is as follows. In \S 2,  we consider the set of  normalised solitons inside  $\cP^s$,  define and prove the existence of  $f$-twisted slices and  give the notion of solitonic pre-moduli space. In \S 3,  we define the space of essential infinitesimal solitonic deformations, we  show that it is finite dimensional and we prove   our main result, which gives  a  local description of the solitonic pre-moduli space. In \S 4,  we introduce the notion of  the solitonic rigidity and  study the rigidity of the standard metrics of a compact symmetric spaces of rank one,   showing  that the space of essential infinitesimal solitonic deformations of $\bC P^n$ is non trivial. We also treat the solitonic rigidity of Einstein metrics with certain  conditions on the curvature. In  \S 5,  we define the weak solitonic rigidity of Einstein metrics and  obtain  some related results under  conditions on the diameter of the manifold or in the K\"ahler situation. \par
\smallskip
\noindent{\it Notation.} \par
Throughout the paper,  $M$ is  a compact $n$-dimensional manifold,  $\cF(M) = \cC^\infty(M, \bR)$ is  the space of $\cC^\infty$ real  functions, $\cD$ is the set of all diffeomorphisms from $M$ into itself and $\cM$ is the space of  all Riemannian metrics on $M$, i.e.,  the cone of  smooth sections of  $S^2(T^*M)$ that  determine  positive definite inner products on each tangent space of $M$. \par

 For a fixed Riemannian metric $g$ on $M$, we  denote by $\DD$ its Levi Civita connection and by $\R$ its Riemann curvature tensor, defined by
 $\R_{XY} = \DD_{[X,Y]} - [\DD_X,\DD_Y]$ for any pair of vector fields $X$,$Y$. The Ricci tensor,  the scalar curvature and the volume form of $g$ are   denoted by $\Ric$, $s$ and $\m$,  respectively.
 When it is  necessary to specify the dependence on the given metric $g$, a superscript or subscript $``g"$  is sometimes added.\par

We denote by  $\Hs (M)$ the Sobolev space of  real functions  with  partial derivatives in $L^2(M)$ up to order $s$.
  Similarly,  if $\pi: E \longrightarrow M$ is a vector bundle,  $\Hs(E)$ is the  space of  sections of $E$ with square integrable partial derivatives up to order $s$, so that   $\cC^\infty(E) = \bigcup_{s \geq 0} \Hs(E)$ is the space of smooth sections of $E$.\par
 Recall that, by   Sobolev embedding's Theorem,   $\Hs$-differentiability implies $\cC^{s-\left[\frac{n}{2}\right]-1}$-differentiability.   This allows to consider
for any $s > \frac{n}{2}$  the collection  $ \cM^s$ of $\cC^0$ Riemannian metrics
that are in $\Hs(S^2(T^*M))$,  and  the group  $\cD^{s+1}$ of $\operatorname{H}^{s+1}$-diffeomorphisms of $M$, i.e. of  $\cC^1$-diffeomorphisms $\varphi$, with coordinate expressions   of $\varphi$ and $\varphi^{-1}$  both in $\sob {s+1}$.
As it is shown in   \cite{Eb, Eb1},    $\cM^s$  and $\cD^{s+1}$ are   Hilbert manifolds that naturally include $\cM$ and  $\cD$, respectively,  and  $\cD^{s+1}$ is a topological group acting  on $\cM^s$  via a right action,  which   extends  the standard  right action of $\cD$ on $\cM$,  namely $\varphi \ast g\= \varphi^*(g)$ with $\varphi\in \cD^{s+1}$ and $g\in \cM^s$.

We finally recall that  the tangent spaces $T_g\mathcal M^s$  and $T_\varphi \cD^{s+1}$ of $\cM^s$ and $\cD^{s+1}$, respectively,  are naturally identifiable with the Hilbert spaces
 \beq \label{identifications} T_g\cM^s \simeq \Hs(S^2(T^*M))\ ,\qquad T_\varphi \cD^{s+1}Ê\simeq \sob{s+1}(TM)\eeq
 and that the scalar products
 \beq \label{1.2} <\cdot, \cdot>_g: T_g \cM^s \times T_g\cM^s \longrightarrow \bR\ ,\ \ <h, k>_g \= \int_M g(h, k) \m_g\eeq
 determine a $\cD^{s+1}$-invariant, smooth (weak) Riemannian structure on $\cM^s$.
 \par
\bigskip
\section{Moduli and pre-moduli spaces of Ricci solitons}
\setcounter{equation}{0}
\subsection{Normalised Ricci solitons and Perelman's entropy functional}\hfill\par
Let $M$ be an  $n$-dimensional compact manifold. We recall that a  Riemannian metric $g$ on $M$ is called  {\it (gradient) Ricci soliton} if there exist $f\in {\mathcal F}(M)$ and a constant $c\in \bR$ so that
\beq \label{definitionsol}\Ric + \DD d f - c\cdot g = 0\ .\eeq
If $g$ is a Ricci soliton,  there exists a unique $c$ and a unique $f$, determined up to a constant,  satisfying \eqref{definitionsol}. The Einstein metrics are  Ricci solitons of a special kind,  with  $f$ constant. The  non-Einstein Ricci solitons are  called  {\it non-trivial}.   \par
It is   known  that the  non-trivial Ricci solitons on  compact manifolds are  necessarily  {\it shrinking}, i.e. with $c>0$ (see e.g. \cite{ELM}), so  that any  Ricci soliton on $M$, which is not Ricci flat,  can be rescaled   to have    \eqref{definitionsol}Ê satisfied with the constant $c=1$.\par
\smallskip
In particular, if we consider the   set of pairs
 $$\cP \= \left\{ (g,f)\in \cM \times \cF(M)\ : \ \  \frac 1{(2\pi)^{\frac n2}}\, \int_M e^{-f}\, \mu_g = 1\  \right\}\ ,  $$
 we have that  the equivalence classes  up to
homotheties of the Ricci solitons that  are not Ricci flat,  are in one-to-one correspondence with the  subset of $\cP$ defined by
$$\Sol \= \left\{\ (g,f)\in \cP\ :\  \Ric_g + \DD^gdf - g = 0\ \right\}\ .$$
We call the elements of this space  {\it normalised Ricci solitons\/}.\par
\medskip
Consider  the map $W: \cP \longrightarrow \bR$,   given by the restriction to  triples $(g, f, \frac{1}{2}) \in \cP \times \{(\frac{1}{2})\}$ of  Perelman's entropy functional  (\cite{Pe}), i.e.
\beq \label{entropy}W(g, f) =  \frac{1}{(2 \pi)^{\frac{n}{2}}} \int_M \left(\frac{1}{2} |\nabla f|_g^2 +\frac{1}{2}s_g  + f - n\right) e^{-f} \mu_g\ .\eeq
 The Euler-Lagrange equations for   its   critical points  are
 (see e.g. \cite{HM, Ch})
\beq \label{ELequations} \left\{\begin{array}{l} \Ric_g  +  \DD^gdf - g = 0\ ,\\
\ \\
\Delta_g f - \frac{1}{2}Ê|\nabla f|_g^2 + \frac{1}{2}Ê s_g + f - n  -W(g,f) = 0\ ,
\end{array}\right.\eeq
so that any such critical point is  in $\Sol$ (it satisfies  the first of \eqref{ELequations}).
Conversely, if $(g, f) \in \Sol$, a  well known argument (see e.g. \cite{ELM}) shows that the function $\Delta_g f - \frac{1}{2}Ê|\nabla f|_g^2 + \frac{1}{2}Ê s_g + f$ is equal to a constant, which,  by integration, is directly  seen to be   $n + W(g, f)$. This means  that   the following  conditions are equivalent:
{\it \begin{itemize}
\item[a)] $(g,f) \in \Sol$;
\item[b)] $(g,f)$ is a solution  of  \eqref{ELequations};
\item[c)] $(g,f)$ is a critical point of Perelman's  entropy functional \eqref{entropy},
\end{itemize}}
\noindent
and indicates  a useful  parallelism  between Ricci solitons and Einstein metrics.
In the following, we denote by $\bS $ the Euler-Lagrange operator
\beq \bS =  (\bS_1, \bS_2):\cP \longrightarrow \cC^\infty(S^2T^*M) \times \cF(M)\ ,$$
$$\label{map} \bS(g,f) = \left(\Ric_g +  \DD^gdf  - g\ ,\  \Delta_g f - \frac{1}{2} |\nabla f|_g^2 +  \frac{1}{2}  s_g + f - n - W(g,f)\right)\ ,\eeq
so that ${\Sol} = \bS^{-1}(0,0)$.
\medskip
\subsection{Slices and pre-moduli spaces of Ricci solitons}\hfill\par
Let $g \in \cM \subset \cM^s$ be a $\cC^\infty$ Riemannian metric  and   $\Iso(g) \subset \cD$ its isometry group.  We recall that  Ebin's Slice Theorem (\cite{Eb, Eb1}) states that,  for any  $s \geq \left[\frac{n}{2}\right] + 3$, there exist
\begin{itemize}
\item[--]  a $\cD^{s+1}$-invariant neighbourhood $\cU^s \subset \cM^s$ of $g$,
\item[--] a Hilbert submanifold $\cS^s \subset   \cU^s$  through $g$ (called the {\it Ebin slice}),
\item[--] a  neighbourhood $\cV^{s+1} \subset \cD^{s+1}/\Iso(g)$ of  the coset   $o = \Iso(g)$ and  a local cross section $\chi: \cV^{s+1}\longrightarrow  \cD^{s+1}$,
\end{itemize}
such that  {\it
\begin{itemize}
\item[1)] $\cS^s$ is $\Iso(g)$-invariant and   for   $\varphi  \in \cD^{s+1}$
$$\cS^s \cap ( \varphi\ast g) \neq \emptyset\qquad \Longleftrightarrow \qquad \varphi\in \Iso(g)\ ;$$
\item[2)] the map
$$L: \cV^{s+1} \times \cS^s  \longrightarrow  \cM^s\ ,\qquad L(u, h) = \chi(u)\ast h$$
determines  a homeomorphism between $\cS^s \times \cV^{s+1}$ and  $\cU^s$.
\end{itemize}}
This  implies that    {\it the  space $ \cM^s/\cD^{s+1}$ of isometry classes  in $\cM^s$ has an open neighbourhood  $\overline \cU^s = \cU^s/\cD^{s+1} $ of  $ [g] = \cD^{s+1}\ast g$, which  is  naturally   identifiable with the quotient  $\cS^s/\Iso(g)$}.
\par
\bigskip
We remark that the Ebin slice $\cS^s$ is in fact  the image under the exponential map $\exp: T_g \cM^s \longrightarrow \cM^s$
of an open subset of  a  subspace $V^s\subset T_g \cM^s$, which is complementary to
 the tangent space  $T_g( \cD^{s+1}\ast g)$. Such complementary   subspace  can be shortly described as  follows.
Consider the operators
\beq
\begin{array}{ll}\!\!\!\!\!\!\a_g: \sob{s+1}(TM)\longrightarrow \Hs(S^2T^*M), & \a_g(X) \:= \cL_X g\ ,\\
\ & \\
\!\!\!\!\!\!\d_g: \Hs(S^2T^*M)\to  \sob{s-1}(T^*M)\!\simeq\! \sob{s-1}(T M),& \d_g h_x := (\DD_{e_i}h)_x(e_i,\cdot)\end{array}\eeq
 where  $\cL_X$ is the Lie derivative along the vector field $X$ and
 $(e_i)$ is an arbitrary  $g_x$-orthonormal basis of $T_xM$, $x\in M$.  One can  check that $\a_g$ has injective symbol and that $- \d_g$ is  the  adjoint  $\a^*_g$ of $\a_g$  w.r.t.
 \eqref{1.2}, i.e.
 $$<\a_g(X), k>_g = \int_M  g(\cL_X g, k) \m_g = - \int_M  \d_gk(X) \m_g = <X, \a^*_g(k) >_g\ .$$
From this, one gets  the  $<,>_g$-orthogonal decomposition  (\cite{Eb1}, Cor. 6.9)
\beq \label{dec}  \Hs(S^2T^*M) = \Im \a_g|_{\sob{s+1}(TM)} \oplus \ker \d_g|_{\Hs(S^2T^*M)}\ . \eeq
Under  the  natural identification $T_g \cM^s =  \Hs(S^2T^*M)$,   the subspace
 $T_g( \cD^{s+1}\ast g) \subset T_g \cM^s$   corresponds to  $\Im \a_g|_{\sob{s+1}(TM)}$ and the subspace $V^s$ can be chosen to be
 $\ker \d_g|_{\Hs(S^2T^*M)}$, which is then the tangent space $T_g \cS^s$ at $g$.
\par
\medskip
Imitating Ebin's construction,  for any  Ricci soliton $(g, f) \in \cP$ we now want to construct a special  submanifold $\cS_{f}^s \subset \cM^s$ with  the same property of the Ebin slice,  but with a tangent space $T_g \cS_{f}^s \subset T_g \cM^s$  characterised in a different way, more convenient  for our purposes.  \par
\smallskip
Let $g \in \cM^s$ and  assume that  $f \in \cF(M)$ is a  smooth function, which is $\Iso(g)$-invariant  (this surely occurs when $(g,f)$ is a Ricci soliton). The corresponding {\it  twisted divergence}Ê is the operator
\beq \d_{(g,f)}:\Hs(S^2T^*M)\to \sob{s-1}(T^*M)\ ,\ \d_{(g,f)} h \= e^f\cdot \d_g(e^{-f}\cdot h) = \d_g h - \imath_{\nabla f}h\ .\eeq
When $g$ is considered  as known,  we will shortly write  $\d_f$ instead of  $\d_{(g, f)}$. \par
\smallskip
Notice that, in analogy with $-\d$, the operator $- \d_f$ is  the formal adjoint  of $\alpha_g$ w.r.t. the ``twisted'' inner product on $T_g \cM^s = \Hs(S^2(T^* M))$
$$<h,k>_{(g,f)} := \int_M g(h,k) e^{-f}\mu_g\ .$$
We have that
\begin{prop} For  $s\geq \left[\frac{n}{2}\right] + 3$ 
\beq\label{twisteddec}  \Hs(S^2T^*M) = \a_g(H^{s+1}(TM)) \oplus \ker \d_{(g,f)}|_{\Hs(S^2T^*M)} \eeq
and there exists a submanifold $\mathcal S^s_{f} \subset \cM^s$, which satisfies   the same conditions  (1) and (2) of  $\cS^s$, but
 has tangent space $T_g\cS^s_{f} = \ker \d_{(g,f)}|_{\Hs(S^2T^*M)}$.
\end{prop}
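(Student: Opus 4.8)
The statement decomposes into the topological direct-sum identity \eqref{twisteddec} and the existence of the slice $\cS^s_f$; the plan is to derive the first by repeating the proof of the Berger--Ebin decomposition \eqref{dec} with the weighted pairing $\<\cdot,\cdot>_{(g,f)}$ in place of $\<\cdot,\cdot>_g$, and the second by imitating Ebin's construction of $\cS^s$ with $\ker\d_{(g,f)}$ playing the role of $\ker\d_g$. For \eqref{twisteddec}, note first that $\a_g$ does not involve $f$, so the summand $\a_g(H^{s+1}(TM))$ coincides with the one in \eqref{dec} and only the complement changes. As recalled in the excerpt, $\a_g$ has injective symbol and $-\d_{(g,f)}$ is its formal adjoint for $\<\cdot,\cdot>_{(g,f)}$; hence $P\=\d_{(g,f)}\circ\a_g\colon H^{s+1}(TM)\to H^{s-1}(TM)$ is an elliptic operator of order two, self-adjoint and non-negative for $\<\cdot,\cdot>_{(g,f)}$. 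On the compact manifold $M$ this yields $\ker P=\ker\a_g$ finite dimensional, $\a_g(H^{s+1}(TM))$ closed in $H^s(S^2T^*M)$ (via the a priori estimate for $\a_g$ on a complement of $\ker\a_g$, together with Rellich), and $\ker\d_{(g,f)}|_{H^s}$ equal to the $\<\cdot,\cdot>_{(g,f)}$-orthogonal complement of $\a_g(H^{s+1}(TM))$; solvability of $PX=\d_{(g,f)}h$ modulo $\ker\a_g$ together with $\a_g(H^{s+1}(TM))\cap\ker\d_{(g,f)}=\{0\}$ then gives \eqref{twisteddec}.

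For the slice I would first record that $\ker\d_{(g,f)}$ is $\Iso(g)$-invariant: for $\psi\in\Iso(g)$ one has $f\circ\psi=f$ and $\psi^*\circ\d_g=\d_g\circ\psi^*$, so $\psi^*(e^{-f}h)=e^{-f}\psi^*h$ gives $\d_{(g,f)}(\psi^*h)=\psi^*(\d_{(g,f)}h)$. Thus $\ker\d_{(g,f)}$ is a closed, $\Iso(g)$-invariant complement of $T_g(\cD^{s+1}\ast g)=\a_g(H^{s+1}(TM))$ in $T_g\cM^s$, which are precisely the properties of $\ker\d_g$ that Ebin's construction of $\cS^s$ relies on. One then takes $\cS^s_f$ to be the image of a small $\Iso(g)$-invariant neighbourhood of $0$ in $\ker\d_{(g,f)}$ under the exponential map of the weak Riemannian metric \eqref{1.2} --- equivalently, since $\cM^s$ is open in $H^s(S^2T^*M)$, the intersection of $\cM^s$ with a small $\Iso(g)$-invariant ball in the affine slice $g+\ker\d_{(g,f)}$ --- and runs Ebin's argument line by line: the orbit of the smooth metric $g$ is a closed split submanifold with tangent space $\a_g(H^{s+1}(TM))$ at $g$; the $\Iso(g)$-invariant complement $\ker\d_{(g,f)}$ propagates to a $\cD^{s+1}$-invariant complementary subbundle along the orbit; and the equivariant tubular-neighbourhood and local cross-section construction produces $\cU^s$, $\cV^{s+1}$, $\chi$ satisfying (1) and (2), with $T_g\cS^s_f=\ker\d_{(g,f)}$ built in.

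At the infinitesimal level the slice properties can also be read off by comparison with the Ebin slice $\cS^s$ already at hand: for $L_f(u,h)=\chi(u)\ast h$ on $\cV^{s+1}\times\cS^s_f$ the differential at $(o,g)$ is an isomorphism by \eqref{twisteddec}, and the projection $\ker\d_{(g,f)}\to\ker\d_g$ along $\a_g(H^{s+1}(TM))$ is an isomorphism of complements, so that $\cS^s_f$, like $\cS^s$, meets the nearby orbits transversally in exactly one point.

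The elliptic theory underlying \eqref{twisteddec} and the equivariance identity for $\d_{(g,f)}$ are routine; the step I expect to require real care --- and the one I would write out carefully --- is checking that Ebin's slice construction truly goes through with a complement that is only $\Iso(g)$-invariant and split rather than $\<\cdot,\cdot>_g$-orthogonal. The delicate points there are the regularity bookkeeping in Ebin's setting (the $\cD^{s+1}$-action on $\cM^s$ being merely continuous, which is why $L$ is asserted to be a homeomorphism and not a diffeomorphism) and the careful shrinking of $\cU^s$, $\cV^{s+1}$ and $\cS^s_f$ needed for the sharp form of property (1), namely $\cS^s_f\cap(\varphi\ast g)\neq\emptyset\iff\varphi\in\Iso(g)$.
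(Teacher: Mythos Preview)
Your proposal is correct and follows essentially the same line as the paper: the decomposition \eqref{twisteddec} via ellipticity of $\d_{(g,f)}\circ\a_g$, and the slice by transporting $\ker\d_{(g,f)}$ along the orbit and applying the exponential map, exactly as in Ebin's construction. The one point worth noting is how the paper dispatches the step you flag as delicate --- smoothness of the propagated complementary subbundle when the complement is not $\<\cdot,\cdot>_g$-orthogonal. Rather than treating $\ker\d_{(g,f)}$ as a merely split complement, the paper observes that it \emph{is} orthogonal for the twisted product $\<\cdot,\cdot>_{(g,f)}$, transports $f$ along the orbit to $f'=\varphi^*f$, and writes down the explicit orthogonal projector $P_{g'}=\a\circ(\d_{f'}\circ\a)^{-1}\circ\d_{f'}$ at each $g'=\varphi^*g$; smoothness of this family then follows from the same pseudodifferential argument Ebin uses, and $\nu=\ker P$ is the required smooth subbundle. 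So your instinct that Ebin's proof should go through is right, and the mechanism is simply to carry the weighted inner product --- not just the complement --- equivariantly along the orbit.
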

\begin{pf} We    follow the line of arguments  of \cite{Eb1}, Thm.7.1. Since $\a$ has injective symbol and $-\d_{(g,f)}$ is the formal adjoint of $\a_g$ w.r.t. the inner product $<,>_{(g,f)}$, one can check  that  $\d_{(g,f)}\circ\a_g$ is  elliptic and infer that  \eqref{twisteddec} holds (see e.g.  \cite{Eb1}, Cor. 6.9).\par
Consider now the orbit $\cO^s\=  \mathcal D^{s+1}\ast g$ in $\cM^s$ and the restricted tangent bundle $T(\mathcal M^s)|_{\cO^s}$ over $\cO^s$. For any $ g' =\varphi^*(g) \in \cO^s$, consider the subspace of $T_{g'} \cM^s$ defined by
$$\nu_{g'} = \varphi^*(\ker \d_{(g,f)}) \subset \Hs(S^2 T^* M) = T_{g'} \cM^s$$
and set  $\displaystyle \nu = \bigcup_{g' \in \cO^s}Ê\nu_{g'}$.  Note that the  $\nu_{g'}$'s (and hence $\nu$) are well defined,  because $f$ is $\Iso(g)$-invariant. \par
We claim that $\pi: \nu \to \cO^s$  is a smooth  subbundle of $\pi: T\cM^s|_{\cO^s} \to \cO^s$. In fact, one can consider the smooth family of inner products $< , >_{(g', f')}$, determined by  $ g' = \varphi^*(g) \in \cO^s$ and  $f' = \varphi^*(f)$,   and the corresponding family of orthogonal projectors
$$P_{g'}: T_{g'} \cM^s \longrightarrow T_{g'} \cO^s = \Im \a_{g'}\ ,\quad P_{g'} \= \a\circ (\d_{f'}\circ \a)^{-1}\circ \d_{f'} ,$$
where, for shortness,  we denote by  $\a \= \a_g$ and $\d_{f'} \= \d_{(g', f')}$.  The map $P_{g'}$ is actually well defined because the composition
$$(\d_{f'}\circ \a)^{-1}\circ \d_{f'}|_{\Hs(S^2T^*M)} : \Hs(S^2 T^* M) \longrightarrow \sob{s+1} (TM)$$
is a single valued map: to see this, one needs  to observe that
$$\d_{f'}(\Hs(S^2T^*M)) = (\d_{f'}\circ \a)(\sob{s+1}(TM))$$
 and that  the self-adjoint elliptic operator $\d_{f'}\circ \a: \sob{s+1}(TM)\longrightarrow \sob{s-1}(TM)$ induces a bijection
$$\left.\d_{f'}\circ \a\right|_{(\d_{f'}\circ \a)(\sob{s+1}(TM))}: (\d_{f'}\circ \a)(\sob{s+1}(TM))\overset{\sim}\longrightarrow (\d_{f'}\circ \a)(\sob{s-1}(TM))\ .$$
Consider now   the  projection $P:  T\cM^s|_{\cO^s} \to T\cO^s$, given at any $g' \in \cO^s$ by the operator $P_{g'}$ and observe that $\nu = \ker P$. Following   exactly the same arguments in \cite{Eb1}, p. 31--32, one can  check that $P$ is a smooth map and hence   that $\pi: \nu = \ker P \to \cO^s$ is a smooth bundle, as claimed.
\par
\smallskip
Now, if  $\exp: T\cM^s \longrightarrow \cM^s$ is the exponential map relative to the  weak Riemannian metric \eqref{1.2}, there exists
a neighbourhood $\cV \subset \nu$ of the zero section, such that  $\exp|_{\cV}: \cV \longrightarrow \cU \subset \cM^s$ is a diffeomorphism onto  a $\cD^{s+1}$-invariant neighbourhood $\cU$ of  $\cO^s$. By the same arguments of \cite{Eb}, p. 32--33, the submanifold $\cS^s_{f} = \exp(\cV \cap \nu|_g) $ satisfies all required conditions. \end{pf}
\medskip
We call  $\cS^s_{f}$ the  {\it  $f$-twisted slice at $g$ in $\cM^s$};  when $g$ is smooth, we  set $\mathcal S^\infty_{f} \= \mathcal S^s_{f} \cap \mathcal M$.  As in \cite{Eb} Thm. 7.4, one  can  check  that also the set $\mathcal S^\infty_{f}$  satisfies the  conditions (1) and (2) of  Ebin slices.\par
\smallskip
For any $s\geq \left[\frac{n}{2}\right]+3$, we denote by $\cP^s$ the Hilbert manifold
$$\cP^s :=  \left\{ (g,f)\in \cM^s \times \Hs(M)\ |\ \frac1{(2\pi)^{\frac n2}}\int_M e^{-f} \mu_g = 1\ \right\}\ , $$
which naturally extends $\cP$ and we obviously  extend  \eqref{map} to the operator
$$\bS^s: \cP^s \longrightarrow \sob{s-2}(S^2T^*M) \times \sob{s-2}(M)\ .$$
We also set  $\Sol^s \= (\bS^s)^{-1}(0)$.  Since  any  Ricci soliton  of class $\cC^2$ is necessarily  real analytic (\cite{DW}), by standard arguments one gets  that
$\Sol^s = \Sol$ for every $s\geq \left[\frac{n}{2}\right]+3$, in complete analogy with  the Einstein case  (see e.g. \cite{Ko1}, Lemma 2.5). \par
\medskip
Let  $(g,f)\in {\Sol}$ and recall  that   $f$ is $\Iso(g)$-invariant, so that  one can consider the $f$-twisted slice $\cS^s_{f} \subset \cM^s$ through $g$ and the subset
$\mathcal S^\infty_{f} = \cS^s_{f} \cap \cM$ of $\cM$. In analogy with \cite{Ko1} (see also \cite{Be}, Ch. 12), we introduce the following:
\begin{definition} A {\it  solitonic pre-moduli  space  at $(g,f)$\/} is the set
$$\cA_{(g,f)} \= {\Sol}^s \cap \left(\cS^s_{f} \times \Hs(M)\right) =  {\Sol} \cap \left(\cS^\infty_{f}\times\cF(M)\right) $$
where $\cS^s_{f}$ is a $f$-twisted slice through $g$ and $s\geq \left[\frac{n}{2}\right]+3$.
\end{definition}
Note that  $\cA_{(g,f)}$ is invariant under the natural  action of $\Iso(g)$ on  $\cP^s$. Moreover,
from  the  properties of slices,   for any $s \geq  \left[\frac{n}{2}\right]+3$, the quotient $(\Sol \cap \cU^s)/\cD^{s+1}$ of a   sufficiently small   neighbourhood $\Sol \cap \cU^s$  of   $(g, f)\in \Sol^s$ can be naturally identified with the  quotient $\cA_{(g,f)}/\Iso(g)$ of the corresponding solitonic pre-moduli space $\cA_{(g,f)}$.  This means that  {\it the  local behavior of the moduli space $\Sol/\cD$ is determined by  the  quotients of the pre-moduli spaces  by   the actions of (finite dimensional!) groups  of isometries}. \par
\bigskip
\section{The solitonic pre-moduli spaces are\\
 real analytic sets in finite-dimensional manifolds}
\setcounter{equation}{0}
Consider  a $\cC^1$-curve  $(g_t,f_t)$ in $\Sol$  passing through a fixed Ricci soliton $(g_0, f_0) = (g, f)$. The tangent vector $(h = \dot g|_{t = 0}, a = \dot f|_{t = 0})$ is necessarily  in the kernel of  the linearized operator $d\bS_{(g,f)}$ of the operator \eqref{map}.  If furthermore   $(g_t,f_t)$ takes values in a pre-moduli space $\cA_{(g,f)}$,  its tangent  vector $(h, a)$  satisfies the additional condition $\d_{(g, f)} h = 0$.  These observations lead to the following definition.
\begin{definition} For a given $(g,f)\in {\Sol}$,  the elements  of the subspace $ \ker d\bS_{(g,f)}\subset  T_{(g,f)}\cP$ are called  {\it infinitesimal solitonic deformations} (shortly,  {\it i.s.d.}) {\it of $(g,f)$}.  The i.s.d.'s  $(h,a)$ such that   $\d_{(g,f)} h = 0$  are called  {\it essential}.
\end{definition}
Let us  denote by  $Z_{(g,f)}$ the  space of the essential i.s.d.'s at $(g,f)$  and  set
$$Z^s_{(g,f)} = \ker d\bS^s_{(g,f)}\cap (T_{(g,f)}\cS^s_{f} \times \Hs(M))\ .$$
 We will shortly  see that  $ Z^s_{(g,f)}$ is actually identifiable with   $Z_{(g,f)}$.
\par
\smallskip
In the next lemma, we give the explicit expression for the linearization   $d\bS_{(g,f)}$ of the operator $\bS = (\bS_1,\bS_2)$ at a normalised Ricci soliton $(g,f)$. In the following formulas,
 $\Delta_f$  is the   twisted Laplacian (also called Bakry-Emery Laplacian or Witten Laplacian),  acting on symmetric $2$-tensors as
 $$\Delta_f h = \Tr(\DD^2h) - \DD_{\nabla f}h$$
  and   $\cR: \cC^\infty(S^2T^*M)\longrightarrow \cC^\infty(S^2T^*M)$ is   the operator
 $$\cR(h)(X,Y) = \Tr(h(\R_{X \cdot}Y,\cdot))\ .$$
\begin{lemma}\label{lemma} For any  $(h,a)\in T_{(g,f)}\cP$
\beq\label{dS1} d\bS_1|_{(g,f)}(h,a) = -\frac 12 \Delta_f h - \mathcal R(h) - \frac 12 Dd(\Tr(h)-2a) + \frac 12 \cL_{(\d_f h)^\sharp} g\ ,                   \eeq
\beq\label{dS2} d\bS_2|_{(g,f)}(h,a) = \Delta_f(\Tr(h) - 2a) + (\Tr(h)-2a) - \d_f\left( \d_f(h)\right)\ .   \eeq
\end{lemma}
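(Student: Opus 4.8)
The plan is to compute the two components $d\bS_1$ and $d\bS_2$ separately by differentiating the maps $g\mapsto \Ric_g$, $g\mapsto s_g$, $(g,f)\mapsto \DD^g df$, etc., along a curve $g_t = g + th + O(t^2)$, $f_t = f + ta + O(t^2)$, and then using the soliton equations $\Ric_g + \DD df - g = 0$ and $\Delta_f f - \tfrac12|\nabla f|^2 + \tfrac12 s + f = n + W(g,f)$ to simplify. I would begin by recalling the classical first-variation formulas: the linearization of the Ricci tensor is $d\Ric_g(h) = -\tfrac12\Delta_L h - \delta_g^*(\delta_g h) - \tfrac12 \DD d(\Tr h)$, where $\Delta_L$ is the Lichnerowicz Laplacian, together with $d s_g(h) = -\Delta_g(\Tr h) + \delta_g\delta_g h - g(\Ric_g, h)$ (Besse, Ch.~1). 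The key bookkeeping device is to reorganize the Lichnerowicz Laplacian into the Bakry--Emery Laplacian: $-\tfrac12\Delta_L h = -\tfrac12\Delta_f h - \cR(h) + (\text{terms involving }\Ric)$, absorbing the $\DD_{\nabla f}$ correction and the Ricci terms using the soliton equation $\Ric = g - \DD d f$. This is the step where the shape of \eqref{dS1} emerges; the terms $\delta_g^*(\delta_g h)$ and $-\imath_{\nabla f}h$ combine into $\tfrac12\cL_{(\d_f h)^\sharp}g = \delta_g^*(\d_f h)$, since $\d_f h = \d_g h - \imath_{\nabla f}h$ by the definition in the excerpt.

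For $d\bS_1$ I would next handle the Hessian term: differentiating $(g,f)\mapsto \DD^g df$ gives $d(\DD df)(h,a) = \DD d a + (\text{terms in }\DD h)$ contracted against $\nabla f$, i.e. a contribution of the form $-\tfrac12(\DD_{\nabla f}$-type terms$) + \DD d a$; combined with the $-\tfrac12 \DD d(\Tr h)$ coming from $d\Ric$ and the trivial $-h$ from differentiating $-g$, these reassemble into $-\tfrac12 \DD d(\Tr h - 2a)$ once the Christoffel-variation terms are matched against the pieces already put into $\Delta_f h$ and $\cL_{(\d_f h)^\sharp}g$. The arithmetic here — verifying that every Christoffel-symbol variation $\dot\Gamma$ lands in exactly one of the four advertised terms — is the routine but delicate core of the computation; I would organize it by writing $\dot\Gamma(h)$ explicitly and tracking its contraction with $\nabla f$ through both $d(\DD df)$ and (implicitly) through $\Delta_f$.

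For $d\bS_2$, set $u = \Tr h - 2a$; I would differentiate $\bS_2(g,f) = \Delta_f f - \tfrac12|\nabla f|^2 + \tfrac12 s + f - n - W(g,f)$. The variation of $\Delta_g f$ and of $|\nabla f|^2$ produces terms of the form $\Delta_g a$, $g(\nabla a,\nabla f)$, plus terms in $h$ and $\dot\Gamma$; the variation of $\tfrac12 s_g$ gives $-\tfrac12\Delta_g(\Tr h) + \tfrac12\delta_g\delta_g h - \tfrac12 g(\Ric,h)$. Using the soliton equation to replace $\Ric$ and using that at a soliton $\Delta_f f = $ const (so its variation simplifies), together with the fact that the first variation $d W_{(g,f)}(h,a)$ vanishes on the tangent space to $\Sol$ (here one invokes that $(g,f)$ is a critical point of $W$ and the normalization constraint defining $\cP$), the whole expression collapses to $\Delta_f u + u - \d_f(\d_f h)$; the term $\d_f\d_f h = \delta_f\delta_g h - \imath_{\nabla f}$-corrections accounts precisely for the second-divergence contributions coming from $\delta_g\delta_g h$ and the $|\nabla f|^2$-variation. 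The main obstacle is the $W$-term: one must carefully justify that $dW_{(g,f)}(h,a)$ either vanishes or is exactly cancelled by the contribution from the constraint $\int_M e^{-f}\mu_g = (2\pi)^{n/2}$ differentiated along $(h,a)$ — this is where the choice of the functional $W$ as ``the'' analogue of total scalar curvature really pays off, and I expect the cleanest route is to quote Perelman's first-variation formula for $W$ directly. Once these cancellations are in place, both \eqref{dS1} and \eqref{dS2} follow by collecting terms.
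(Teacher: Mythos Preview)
Your proposal is correct and follows essentially the same route as the paper: both use the classical first-variation formulas (from Besse/Topping) for $\Ric$, $\DD df$, and $s$, then simplify via the soliton equation and the identification $\tfrac12\cL_{(\d_f h)^\sharp}g = \delta^*_g(\d_f h)$; for $d\bS_2$ both invoke that $dW|_{(g,f)}=0$ since $(g,f)$ is a critical point of $W$ on $\cP$ (note: on $T_{(g,f)}\cP$, not merely on $T_{(g,f)}\Sol$, as your parenthetical correctly indicates). The paper's presentation is terser---it writes out a single intermediate formula for $2\,d\bS_1$ and cites Hall--Murphy for the $d\bS_2$ computation---but the underlying argument is exactly the one you describe.
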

\begin{pf}  By the classical formulas for the  variations of  Ricci tensors and Hessians (see e.g. \cite{Be,To}) we have that
\beq\label{eq}2\, d\bS_1|_{(g,f)}(h,a) = -\Delta h - \DD d\Tr(h) +  \cL_{(\d h)^\sharp}g - 2\cR(h) + \phantom{aaaaaaaaaaaaaaa} $$
$$\phantom{aaaaaaaaaaa} +\Ric\circ h + h\circ \Ric - 2 h + 2\DD da + \DD_{\nabla f}h\  - [\DD h\cdot \nabla f]\ ,\eeq
where $[\DD h\cdot \nabla f]$ is the symmetric tensor
$$[\DD h\cdot \nabla f](X,Y) = \DD_Xh(\nabla f,Y) + \DD_Yh(\nabla f,X)$$
and $\Ric\circ h$ denotes  the $(2,0)$-tensor,  associated by the metric $g$ to the composition of $\Ric$ and $h$,
viewed as $(1,1)$-tensors.  From Ricci soliton equation, the definition of  $\d_f h$ and the fact that
$\cL_{(\imath_{\nabla f}h)^\sharp}g  = [\DD h\cdot \nabla f] $, equation   \eqref{dS1} follows. Equation \eqref{dS2} follows from the computations in \cite{HM}, p. 3332, together with the fact that $dW|_{(g,f)} = 0$ because $(g,f) \in \cP$ is a   soliton.\end{pf}
From Lemma \ref{lemma}, we directly get the   following
\begin{theorem}\label{thm1} For any    $(g,f) \in \Sol$,  the  space  $Z_{(g,f)}$ is given by
\beq\label{Z} Z_{(g,f)}  =  \left\{ (h,a)\in T_{(g,f)}\cP\, |\, \d_f h = 0,\, \frac 12 \Delta_f h + \cR(h) = 0,\, a= \frac {\Tr(h)}2\right\}\eeq
and it is finite dimensional.
\end{theorem}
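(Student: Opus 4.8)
The approach is to read off \eqref{Z} directly from the formulas of Lemma~\ref{lemma} once the gauge condition $\d_f h = 0$ has been imposed; the only genuinely analytic ingredient will be a spectral property of the Bakry--Emery Laplacian of a shrinking soliton. First I would take $(h,a)\in T_{(g,f)}\cP$ with $\d_f h = 0$: in \eqref{dS1} the summand $\cL_{(\d_f h)^\sharp}g$ vanishes and in \eqref{dS2} the summand $\d_f(\d_f h)$ vanishes, both being linear in $\d_f h$, so, writing $u\=\Tr(h)-2a$, the linearisation collapses to
\beq
d\bS_1|_{(g,f)}(h,a) = -\tfrac12\Delta_f h - \cR(h) - \tfrac12\,Dd\,u\ ,\qquad d\bS_2|_{(g,f)}(h,a) = \Delta_f u + u\ .
\eeq
Consequently $(h,a)\in Z_{(g,f)}$ if and only if $\d_f h = 0$, $\Delta_f u = -u$ and $\tfrac12\Delta_f h + \cR(h) = -\tfrac12\,Dd\,u$; and conversely, if $\d_f h = 0$, $\tfrac12\Delta_f h + \cR(h) = 0$ and $a = \tfrac12\Tr(h)$, then $u\equiv0$, so $(h,a)$ satisfies the linearised volume constraint $\int_M\bigl(\tfrac12\Tr_g h - a\bigr)e^{-f}\mu_g = 0$ cutting out $T_{(g,f)}\cP$ and $d\bS_{(g,f)}(h,a)=0$. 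Thus \eqref{Z} reduces to the claim that $\Delta_f u + u = 0$ on a compact normalised Ricci soliton forces $u\equiv0$.

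Here the soliton equation enters in an essential way. I would apply the Bochner identity for the twisted Laplacian,
\beq
\tfrac12\Delta_f|\nabla u|_g^2 = |Dd\,u|_g^2 + g(\nabla u,\nabla\Delta_f u) + (\Ric + \Hess f)(\nabla u,\nabla u)\ ,
\eeq
integrate it against $e^{-f}\mu_g$, and use the self-adjointness of $\Delta_f$ for this measure (so the left-hand side integrates to $0$ and the middle term to $-\int_M(\Delta_f u)^2 e^{-f}\mu_g$). Since $(g,f)\in\Sol$ one has $\Ric + \Hess f = g$, whence $\int_M(\Ric+\Hess f)(\nabla u,\nabla u)\,e^{-f}\mu_g = \int_M|\nabla u|_g^2\,e^{-f}\mu_g = -\int_M u\,\Delta_f u\,e^{-f}\mu_g$; inserting $\Delta_f u = -u$, the two $u^2$-terms cancel and one is left with $\int_M|Dd\,u|_g^2\,e^{-f}\mu_g = 0$. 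Hence $Dd\,u\equiv0$, so $\nabla u$ is parallel and $|\nabla u|_g$ is constant; on the compact $M$ this forces $\nabla u\equiv0$ and $u$ (locally) constant, and then $\Delta_f u = -u$ gives $u\equiv0$, as needed. (More generally the same computation yields $\lambda(\lambda-1) = \int_M|Dd\,u|_g^2\,e^{-f}\mu_g\ge0$ for every nonzero eigenvalue $\lambda$ of $-\Delta_f$, i.e.\ $\lambda>1$ — the soliton analogue of the Lichnerowicz--Obata bound — so the borderline $\lambda=1$, which is what the reduction needs to exclude, never occurs.)

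Finally, by \eqref{Z} the assignment $(h,a)\mapsto h$ embeds $Z_{(g,f)}$ linearly into the kernel of $P\=\tfrac12\Delta_f + \cR$ (the function $a=\tfrac12\Tr(h)$ being determined by $h$). Since $\cR$ has order zero and the principal symbol of $\Delta_f$ at a non-zero covector $\xi$ is $-|\xi|_g^2\,\Id$, the operator $P$ is elliptic, so its kernel in $\cC^\infty(S^2T^*M)$ over the compact manifold $M$ is finite-dimensional, and therefore so is $Z_{(g,f)}$; elliptic regularity moreover identifies this kernel with its $H^s$-counterpart and hence $Z_{(g,f)}$ with $Z^s_{(g,f)}$ for $s\geq[n/2]+3$. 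The only real obstacle is the spectral estimate of the second step — everything else is formal manipulation of Lemma~\ref{lemma} together with standard elliptic theory, and even that computation is short: its content is that the normalisation $\Ric+\Hess f = g$ is exactly what keeps $1$ out of the spectrum of $-\Delta_f$.
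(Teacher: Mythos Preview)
Your proof is correct and follows the same line as the paper's: reduce via Lemma~\ref{lemma} and the gauge condition $\d_f h=0$ to the scalar equation $\Delta_f u + u = 0$ for $u=\Tr(h)-2a$, eliminate $u$ by the spectral gap for $-\Delta_f$, and then invoke ellipticity of $\tfrac12\Delta_f+\cR$ for finite dimensionality. The only difference is that the paper cites \cite{CZ} for the strict inequality $\lambda>1$, whereas you supply the Bochner--Lichnerowicz computation and the equality-case analysis yourself, making the argument self-contained.
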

\begin{pf} If $(h,a)\in Z_{(g,f)}$, the function $u:= \Tr(h) - 2a$ satisfies $\D_f u + u = 0$ by \eqref{dS2}. Since the equation  $\D_f u + \l u = 0$ admits  non-trivial solutions  only if  $\l> 1$ (\cite{CZ}),  $u = 0$ and,  by
definitions and Lemma \ref{lemma},   \eqref{Z} follows. Being  the operator $\frac 12 \D_f + \cR$ on $\cC^\infty(S^2T^*M)$  elliptic,
 $Z_{(g,f)}$ is finite dimensional. \end{pf}
We remark here that $Z^s_{(g,f)} = Z_{(g,f)}$. Indeed the same argument as in the proof of Theorem \ref{thm1} shows that $Z^s_{(g,f)}$ consists of elements
$(h,a)$ in $T_{(g,f)}\cP^s$ such that $2a= \Tr(h)$ and $\frac 12 \Delta_f h + \cR(h) = 0$, hence $h$ and $a$ are smooth.\par
\smallskip
The next theorem is an analogue of \cite{Ko1}, Thm. 3.1 on Einstein metrics  and is  a crucial  property  of  solitonic pre-moduli spaces.
\begin{theorem}\label{thm2} Let $(g,f)\in {\Sol}$ be a normalised Ricci soliton and denote by $\cS^s_{f} $  an $f$-twisted slice   at  $g$ in $\cM^s$, $s\geq \left[\frac{n}{2}\right]+3$.
Then   there exists an open neighbourhood $\cU^s$ of $(g,f)$ in $(\cS^s_{f} \times \Hs(M))\cap \cP^s$ and a finite dimensional, real analytic submanifold $\cZ^s\subset \cU^s$ such that :
\begin{itemize}
\item[i)]   $T_{(g,f)}\cZ^s = Z_{(g,f)}$;
\item[ii)] the  pre-moduli space $\cA_{(g,f)} = (\cS_{f}^s\cap \Hs(M)) \cap \Sol$ is a real analytic subset of $\cZ^s$.
\end{itemize}

\end{theorem}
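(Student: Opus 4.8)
The strategy I would follow is a Lyapunov--Schmidt reduction for the restriction of $\bS^s$ to the $f$-twisted slice, along the lines of Koiso's treatment of Einstein metrics (\cite{Ko1}; see also \cite{Be}, Ch.~12), the new inputs being Lemma \ref{lemma}, Theorem \ref{thm1} and the twisted slices of Section~2. First I would observe that $\cN^s \= (\cS^s_f\times\Hs(M))\cap\cP^s$ is a real analytic Hilbert submanifold of $\cM^s\times\Hs(M)$ through $(g,f)$: the normalisation $(g',f')\mapsto\int_M e^{-f'}\mu_{g'}$ is real analytic with nowhere vanishing differential (it never vanishes in the $f'$-direction), so $\cP^s$ is real analytic and meets $\cS^s_f\times\Hs(M)$ transversally, while $\cS^s_f$ is real analytic because the Christoffel symbols of the weak metric \eqref{1.2} are polynomial in $g$ and $g^{-1}$, hence real analytic in $g$, so its exponential map is real analytic. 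One has
$$T_{(g,f)}\cN^s = \bigl\{(h,a)\in\ker\d_{(g,f)}\times\Hs(M)\ :\ {\textstyle\int_M}\bigl(\tfrac12\Tr_g h-a\bigr)e^{-f}\mu_g=0\bigr\}\ .$$
Since $\Sol^s\subset\cP^s$ and $\Sol^s=\Sol$, by definition $\cA_{(g,f)} = (\bS^s|_{\cN^s})^{-1}(0)$, so it suffices to describe the zero set near $(g,f)$ of the real analytic map $\Phi\=\bS^s|_{\cN^s}:\cN^s\to\sob{s-2}(S^2T^*M)\times\sob{s-2}(M)$ (that $\bS^s$ is real analytic is standard; in particular $W(g,f)$ is a real analytic function on $\cP^s$).

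Next I would analyse $L\=d\Phi_{(g,f)}=d\bS^s_{(g,f)}|_{T_{(g,f)}\cN^s}$. On $T_{(g,f)}\cN^s$ one has $\d_{(g,f)}h=0$, so by Lemma \ref{lemma}, with $u\=\Tr_g h-2a$ and constraint $\int_M u\,e^{-f}\mu_g=0$,
$$L(h,a) = \Bigl(-\tfrac12\Delta_f h-\cR(h)-\tfrac12 Dd\,u\ ,\ \ \Delta_f u+u\Bigr)\ .$$
The key claim is that $L$ is a Fredholm operator, which I would prove adapting \cite{Eb1}, Thm.~7.1, and \cite{Ko1}. The differential $d\bS^s_{(g,f)}$ on all of $\cP^s$ has an \emph{infinite}-dimensional kernel, containing the directions $(\cL_X g,\,\cL_X f)$ (these lie in $\ker d\bS^s_{(g,f)}$ by $\cD^{s+1}$-equivariance of $\bS$ at a soliton); but the $\langle\cdot,\cdot\rangle_{(g,f)}$-orthogonal splitting \eqref{twisteddec} shows that intersecting with $\ker\d_{(g,f)}$ kills exactly these directions (on the slice, $\cL_X g\in\ker\d_{(g,f)}$ forces $X$ to be Killing, hence $\cL_X f=0$), so that $\ker L=Z_{(g,f)}$, finite dimensional and smooth by Theorem \ref{thm1} and the remark $Z^s_{(g,f)}=Z_{(g,f)}$. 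Combining the same splitting with the ellipticity of $\tfrac12\Delta_f+\cR$ on $S^2T^*M$ and the invertibility of $\Delta_f+\Id$ on functions --- both recorded in the proof of Theorem \ref{thm1}, the latter via the eigenvalue bound of \cite{CZ} --- gives that $\Im L$ is closed and of finite codimension.

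Once $L$ is known to be Fredholm, the rest is routine. I would choose closed splittings $T_{(g,f)}\cN^s=Z_{(g,f)}\oplus W$ and $\sob{s-2}(S^2T^*M)\times\sob{s-2}(M)=\Im L\oplus C$ with $\dim C<\infty$, let $p$ be the projection onto $\Im L$ along $C$, and set $F\=p\circ\Phi$. Then $F:\cN^s\to\Im L$ is real analytic with $dF_{(g,f)}=L$ onto and $\ker dF_{(g,f)}=Z_{(g,f)}$, so by the real analytic implicit function theorem $\cZ^s\=F^{-1}(0)$ is, in a neighbourhood $\cU^s$ of $(g,f)$ in $\cN^s$, a finite dimensional real analytic submanifold with $T_{(g,f)}\cZ^s=Z_{(g,f)}$ --- this is (i). Finally, for $(g',f')\in\cU^s$ one has $\Phi(g',f')=0$ iff $(g',f')\in\cZ^s$ and $\Xi(g',f')=0$, where $\Xi\=(\Id-p)\circ\Phi|_{\cZ^s}:\cZ^s\to C$ is real analytic; hence $\cA_{(g,f)}\cap\cU^s=\Xi^{-1}(0)$ is a real analytic subset of $\cZ^s$ --- this is (ii).

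The routine part is the third paragraph. The delicate points are in the second: verifying the real analyticity of all the data (notably that the $L^2$-exponential map defining $\cS^s_f$ and the entropy term $W$ are real analytic) is technical but standard, while the genuine obstacle is the Fredholm property of the \emph{coupled} operator $L$ --- one must check that the slice condition $\d_{(g,f)}h=0$ is compatible with the elliptic estimate for $\tfrac12\Delta_f+\cR$ (i.e. that $\d_{(g,f)}$ interacts correctly with the leading operators at a soliton) and that $\Delta_f u+u=0$ has no non-trivial solution, the latter being exactly where the soliton theory invokes \cite{CZ} and departs from Koiso's Einstein argument.
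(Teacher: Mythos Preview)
Your outline follows the same Lyapunov--Schmidt strategy as the paper, and you correctly isolate the crux: one must show that $L=d\bS^s_{(g,f)}|_{T_{(g,f)}\cN^s}$ has closed image (equivalently, that $F^s(W^s_{(g,f)})$ is closed, in the paper's notation). Once that is done, your third paragraph is exactly how the paper proceeds, via the real analytic implicit function theorem as in \cite{Ko1}.

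The gap is in your second paragraph. You assert that ``the ellipticity of $\tfrac12\Delta_f+\cR$ on $S^2T^*M$ together with the splitting \eqref{twisteddec} gives that $\Im L$ is closed'', but this does not follow. The operator $F^s(h,u)=(-\tfrac12\Delta_f h-\cR(h)-\tfrac12 Ddu,\ \Delta_f u+u)$ is elliptic on the \emph{full} space $\Hs(S^2T^*M)\times\Hs(M)$, so $\Im F^s$ is closed there; but there is no reason for $F^s$ to preserve the subspace $W^s_{(g,f)}=\{\d_f h=0\}$, and restricting the domain of a Fredholm operator to a closed subspace can destroy closedness of the image. The ``compatibility of $\d_f$ with the leading operators'' that you flag at the end is precisely the missing piece, and it is not a routine elliptic estimate.

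The paper supplies this via a twisted contracted Bianchi identity. One introduces $\b_g(h,u)=\d_f h-\tfrac12 du$ and proves the pointwise identity $\d_f(\Ric+\DD df-g)=\tfrac12 d(2\Delta f-|\nabla f|^2+s+2f)$, which yields $\b_g\circ\bS^s\equiv 0$ on all of $\cP^s$. Differentiating at the soliton gives $\b_g\circ d\bS^s_{(g,f)}=0$, hence $F^s(W^s_{(g,f)})\subset\ker\b_g\cap\Im F^s$, a closed subspace. For the reverse direction, if $(k,v)=F^s(h,u)$ lies in $\ker\b_g$, one decomposes $h=\a_g(X)+h_1$ with $\d_f h_1=0$ via \eqref{twisteddec} and observes that the constraint forces $X$ into the finite-dimensional kernel of the elliptic operator $G^s\circ\d_f\circ\a_g$, where $G^s(\o)=-\tfrac12(\d_f(\cL_{\o^\sharp}g)+d\d_f\o)$. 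Thus $F^s(W^s_{(g,f)})$ has finite codimension in $\ker\b_g\cap\Im F^s$ and, being the image of a bounded operator, is closed. This Bianchi-type identity is the genuine solitonic analogue of the step in Koiso's Einstein argument and is what your proposal is missing.
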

\begin{pf}  From  definitions,
$ \cA_{(g,f)} = (\bS^s|_{(\cS^s_{f} \times \Hs(M))\cap \cP^s})^{-1}(0,0)$ and    if
we can show that the image
$$d\bS^s|_{(g,f)}(T_{(g,f)}((\cS^s_{f} \times \Hs(M))\cap \cP^s))$$
 is a closed subspace of $\sob{s-2}(S^2T^*M)\times \sob{s-2}(M)$, the claim follows from the Implicit Function Theorem in Hilbert spaces and  the same arguments of the proof of  \cite{Ko1}, Thm. 3.1. Now, we observe   that the tangent space
$$V^s_{(g,f)} := T_{(g,f)}((\cS^s_{f} \times \Hs(M))\cap \cP^s) = $$
$$= \left\{\ (h,a)\in \Hs(S^2T^*M)\times \Hs(M)\,|\, \d_fh = 0,\, \int_M(\Tr(h)-2a)\, e^{-f}\mu_g = 0\ \right\}$$
is identifiable with the vector space
$$W^s_{(g,f)} \= \left\{\ (h,u)\in \Hs(S^2T^*M)\times \Hs(M)\,|\, \d_fh = 0,\, \int_M\, u\, e^{-f}\mu_g = 0\ \right\} \ . $$
We also note that
$$d\bS^s(V^s_{(g,f)}) = F^s(W^s_{(g,f)})\ ,$$
 where $F$ denotes the elliptic differential operator
$$ F^s: \Hs(S^2T^*M)\times \Hs(M) \longrightarrow \sob{s-2}(S^2T^*M)\times \sob{s-2}(M)$$
$$F^s(h,u) = \left(-\frac 12 \Delta_f h - \mathcal R(h) - \frac 12 \DD du\ ,\ \Delta_fu + u\right)\ .$$
Hence, the proof  reduces to showing that $ F^s(W_{(g,f)})$ is a closed subspace of $\sob{s-2}(S^2T^*M)\times \sob{s-2}(M)$. \par
\medskip
Consider  the linear differential operator
$$\b_g: \Hs(S^2T^*M)\times \Hs(M) \longrightarrow \sob{s-1}(T^*M)\ ,\qquad \beta_g(h,u) \= \d_fh - \frac 12 du\, .$$
\begin{lemma} For every $g\in \mathcal M^s$ and $f\in \Hs(M)$, with $s\geq \left[\frac{n}{2}\right]+3$,
\beq \d_f(\Ric + \DD df - g) = \frac 12\, d(2\,\Delta f - |\nabla f|^2 + s + 2f).\eeq
\end{lemma}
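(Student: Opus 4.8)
The plan is to verify the identity by a direct computation, exploiting the contracted second Bianchi identity together with the structure of the twisted divergence $\d_f = \d_g - \imath_{\nabla f}$. First I would recall the classical divergence formula for the Einstein operator: for any metric $g$ one has $\d_g(\Ric) = -\tfrac12\,ds$ (contracted second Bianchi identity), and $\d_g(g) = 0$. Hence
\beq
\d_g(\Ric + \DD df - g) = -\tfrac12\,ds + \d_g(\DD df).
\eeq
The key subcomputation is then to identify $\d_g(\DD df)$. Using the Bochner-type commutation of covariant derivatives, $\d_g(\DD df) = -d(\Delta f) - \Ric(\nabla f,\cdot)$ up to sign conventions matching the paper's definition $\R_{XY} = \DD_{[X,Y]} - [\DD_X,\DD_Y]$ and $\d_g h_x = (\DD_{e_i}h)_x(e_i,\cdot)$; I would carry this out in a local $g$-orthonormal frame, being careful with the sign of the Laplacian $\Delta f = \Tr(\DD df)$.

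Next I would handle the correction term $-\imath_{\nabla f}(\Ric + \DD df - g)$ coming from the difference between $\d_f$ and $\d_g$. Pairing against a test vector, $\imath_{\nabla f}\Ric = \Ric(\nabla f,\cdot)$, $\imath_{\nabla f}(\DD df) = \DD df(\nabla f,\cdot) = \tfrac12\, d(|\nabla f|^2) - $ (a term that again produces $\DD df(\nabla f,\cdot)$ — here I use $\DD_X df(\nabla f) = \DD_{\nabla f}df(X)$ by symmetry of the Hessian, so $\imath_{\nabla f}\DD df = \tfrac12 d|\nabla f|^2$), and $\imath_{\nabla f}g = df$. Assembling, the $\Ric(\nabla f,\cdot)$ terms produced by $\d_g(\DD df)$ and by $-\imath_{\nabla f}\Ric$ should cancel, leaving
\beq
\d_f(\Ric + \DD df - g) = -\tfrac12\,ds - d(\Delta f) + \tfrac12\, d|\nabla f|^2 \cdot(-1) + df
\eeq
— at which point I collect signs to match the claimed right-hand side $\tfrac12\, d(2\Delta f - |\nabla f|^2 + s + 2f)$, i.e. $d(\Delta f) - \tfrac12 d|\nabla f|^2 + \tfrac12 ds + df$. (The discrepancy in the sign of $ds$ and $d\Delta f$ between my rough pass and the target is exactly what the careful frame computation must resolve; I expect the paper's curvature sign convention flips $\d_g\Ric$ to $+\tfrac12 ds$.)

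The main obstacle is bookkeeping of signs and conventions: the paper uses a nonstandard curvature sign ($\R_{XY} = \DD_{[X,Y]} - [\DD_X,\DD_Y]$), so the Bianchi identity $\d_g\Ric = \pm\tfrac12 ds$ and the Weitzenböck/commutator term $\d_g(\DD df) = \mp d\Delta f \pm \Ric(\nabla f,\cdot)$ must be derived from scratch in a frame rather than quoted. I would therefore do the whole computation in a normal frame at a point $x$, writing $\d_g h = \sum_i (\DD_{e_i}h)(e_i,\cdot)$, expanding $(\DD_{e_i}\DD df)(e_i, e_j)$ and commuting the third covariant derivatives to pull out a curvature term that contracts to $\Ric(\nabla f, e_j)$. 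Once that single identity is pinned down with the correct sign, the rest is linear algebra and the cancellation of the Ricci term is automatic; the regularity hypothesis $s \geq [n/2]+3$ is only needed so that all the derivatives appearing (up to third order on $f$, first on $\Ric$) make sense as $H^{s-\text{something}}$ sections and the identity holds in the distributional sense, which is why the lemma is stated at that Sobolev level rather than for smooth data.
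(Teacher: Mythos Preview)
Your approach is correct and essentially identical to the paper's: the paper's proof is a one-line invocation of the contracted second Bianchi identity $2\,\d_g\Ric = ds$ together with the two auxiliary identities $\imath_{\nabla f}(\DD df) = \tfrac12\,d|\nabla f|^2$ and $\d_g(\DD df) = d\Delta f + \imath_{\nabla f}\Ric$, after which the $\Ric(\nabla f,\cdot)$ terms cancel exactly as you anticipate. Your instinct that the paper's convention gives $\d_g\Ric = +\tfrac12\,ds$ is right, and once you fix that sign (and the corresponding one in the commutator formula for $\d_g(\DD df)$) your assembled expression matches the claimed right-hand side.
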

\begin{pfns} The claim is a  consequence of   the contracted second Bianchi identity $2\ \d \Ric = ds$ and  the  equalities
$$d(|\nabla f|^2) = 2\, \d(\DD df)\ ,\qquad d\Delta f = \d(\DD df) - \imath_{\nabla f}\Ric.\eqno\square$$\end{pfns}
As a corollary we have that for every $(g',u)\in\cP^s$
\beq \label{comp} \b_g \circ \bS^s (g',u) = 0.\eeq
So,  taking the differential  of \eqref{comp} at  $(g,f)$ and by the fact that  $\bS(g,f)=0$,
\beq\label{comp2} \b_g\circ d\bS^s|_{(g,f)}(h,a)= 0\qquad\qquad \text{for any} \ (h,a) \in T_{(g,f)} \cP^s\ .\eeq
Using Lemma \ref{lemma},  the equality  \eqref{comp2} can be rewritten as
\beq\label{comm}  \b_g\circ F^s(h,\Tr(h)-2a) = -\d_f(\frac 12 \cL_{(\d_f h)^\sharp)}g)  - \frac 12 d\d_f\d_f(h) = G^{s-1}(\d_fh),\eeq
where we indicate by $G^s$ the elliptic operator
$$G^s:\Hs(T^*M)\longrightarrow \sob{s-2}(T^*M)\ ,\quad G^s(\omega) = -\frac 12(\d_f(\cL_{\o^\sharp}g) + d\d_f\o)\ .$$
Now by \eqref{comm} we have
\beq\label{incl}  F^s(W_{(g,f)}) \subseteq \ker\b_g \cap \operatorname{Im} F^s.\eeq
Notice also that $\ker\b_g \cap \operatorname{Im} F^s$ is a closed subspace,  because $ F^s$ is elliptic.\par
On the other hand, given $(k,v)\in \ker\b_g \cap \operatorname{Im} F^s$, we may write $(k,v) = F^s(h,u)$ for some $(h,u)\in \Hs(S^2T^*M)\times \Hs(M)$.
Using \eqref{dec} we can find some $X\in H^{s+1}(TM)$ such that $h= \a(X) + h_1$ with $\a(X):= \cL_X g$ and $\d_f h_1= 0$.
Moreover we can decompose $u = u_o + u_1$, with
$u_o\in \bR$ and $u_1$ satisfying $\int_M u_1e^{-f}\mu_g = 0$.
Since $(k,v)\in \ker\b_g$, we have that $G^s(\d_fk) = 0$ and hence $G^s(\d_f(\a(X))) = 0$, i.e. $X \in V\= \ker
(G^s\circ\d_f\circ\a) \subset H^{s+1}(TM)$, which  is a finite dimensional space being $G^s$  elliptic. Substituting,  we get that
$$(k,v) =  F^s(h,u) =  F^s(h_1,u_1) +  F^s(\a(X),u_o)\ ,$$
which means that
\beq \label{incl2} \ker\b_g \cap \operatorname{Im} F^s \subseteq    F^s(W_{(g,f)}) +  F^s(\a(V)\times \bR)\ ,\eeq
where $F^s(\a(V)\times \bR)$ is a finite dimensional space. From \eqref{incl} and \eqref{incl2}, it follows that
$ F^s(W_{(g,f)})$ has finite codimension in the closed subspace $\ker\b_g \cap \operatorname{Im} F^s$. Since $F^s(W_{(g,f)})$ is
the image of a bounded linear operator, a standard argument shows that $ F^s(W_{(g,f)})$ is closed (see e.g. \cite{Pa1}, p. 119).
\end{pf}
\bigskip
\section{Solitonic rigidity}
\setcounter{equation}{0}
In  the following, we  constantly assume  $s \geq  \left[\frac{n}{2}\right]+3$.
\par
\begin{definition} A normalised Ricci soliton $(g,f)$ is said to be  {\it solitonic rigid\/} or, shortly,  {\it sol-rigid\/}  (resp. {\it solitonic rigid in $\cP^s$}) if there exists a neighbourhood $\cU\subset \cP$ (resp. $\cU \subset \cP^s$) of $(g,f)$  such that  $\cU \cap \Sol$ consists   only of the $\cD$-orbit   (resp.  $\cD^{s+1}$-orbit) of $(g,f)$.
\end{definition}
By a classical result of Palais (\cite{Pa}; see also \cite{Ko} p. 53) if two normalised Ricci solitons are in the same $\cD^{s+1}$-orbit,  they both lie in the same $\cD$-orbit. So,   if  $(g, f)$ is solitonic rigid in $\cP^s$,   it is  automatically   sol-rigid. \par
\begin{example}\label{Ex}
The constant curvature metrics  of the standard sphere $S^n$ and of the real projective space $\bR P^n$ are  sol-rigid. In fact,
by a result of B\"ohm and Wilking (\cite{BW}), the shrinking Ricci solitons with $2$-positive curvature have constant sectional curvature, so that
any  Ricci soliton on $S^n$ or $\bR P^n$,  which is close  to the standard metric $g_o$,  is surely  isometric to $g_o$.
\end{example}
\medskip
The next proposition is  a direct consequence of  the notion of pre-moduli spaces  and Theorem \ref{thm2} and gives a useful tool for proving  sol-rigidity.\par
\begin{proposition} Let $(g, f)$ be a normalised Ricci soliton. If the space $Z_{(g,f)}$  of essential i.s.d.'s is trivial, the soliton  $(g,f)$ is   sol-rigid.\end{proposition}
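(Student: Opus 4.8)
The plan is to deduce sol-rigidity from the local description of the solitonic pre-moduli space furnished by Theorem \ref{thm2}, imitating Koiso's argument in the Einstein case. First I would apply Theorem \ref{thm2} at $(g,f)$: it produces an open neighbourhood $\cU^s$ of $(g,f)$ in $(\cS^s_f\times\Hs(M))\cap\cP^s$ and a finite dimensional real analytic submanifold $\cZ^s\subset\cU^s$ with $T_{(g,f)}\cZ^s=Z_{(g,f)}$, such that $\cA_{(g,f)}$ is realised inside $\cZ^s$ as a real analytic subset near $(g,f)$. Under the hypothesis $Z_{(g,f)}=\{0\}$ the submanifold $\cZ^s$ is $0$-dimensional, hence --- after shrinking $\cU^s$ so that $\cZ^s$ is connected --- it is the single point $(g,f)$; since $(g,f)\in\cA_{(g,f)}$, it follows that $\cA_{(g,f)}$ coincides with $\{(g,f)\}$ on $\cU^s$. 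In other words, the only normalised soliton lying in $\cS^s_f\times\Hs(M)$ that is $H^s$-close to $(g,f)$ is $(g,f)$ itself.

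The second step would be to reduce an arbitrary nearby soliton to the slice. Let $(g',f')\in\Sol$ be $H^s$-close to $(g,f)$. Since $f$ is $\Iso(g)$-invariant, the $f$-twisted slice $\cS^s_f$ through $g$ is defined and satisfies conditions (1)--(2) of an Ebin slice; in particular $g'$ lies in the associated $\cD^{s+1}$-invariant neighbourhood $\cU$ of the orbit $\cD^{s+1}\ast g$, so that, writing $g'=\chi(u)\ast\bar g'$ under the slice homeomorphism $L(u,h)=\chi(u)\ast h$, the diffeomorphism $\varphi:=\chi(u)^{-1}\in\cD^{s+1}$ satisfies $\bar g':=\varphi^*(g')\in\cS^s_f$. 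Putting $\bar f':=\varphi^*(f')$, the pair $(\bar g',\bar f')$ is again a normalised soliton, because the normalisation integral and the equations \eqref{ELequations} are $\cD^{s+1}$-invariant, and it lies in $\cS^s_f\times\Hs(M)$, i.e. in $\cA_{(g,f)}$. Provided $(\bar g',\bar f')$ is $H^s$-close to $(g,f)$, the first step forces $(\bar g',\bar f')=(g,f)$, whence $(g',f')=\varphi^{-1}\ast(g,f)$ lies in the $\cD^{s+1}$-orbit of $(g,f)$. This says precisely that $(g,f)$ is solitonic rigid in $\cP^s$, and by the result of Palais (\cite{Pa}) recalled after the definition of sol-rigidity it is then sol-rigid.

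I expect the point to check carefully is the closeness of $(\bar g',\bar f')$ to $(g,f)$, which is where the $\Iso(g)$-invariance of $f$ is genuinely used. For the metric component it follows from the homeomorphism property of $L$: writing $g'=\chi(u)\ast\bar g'$, continuity of $L^{-1}$ gives $(u,\bar g')\to(o,g)$ as $g'\to g$, so $\bar g'\to g$ and, setting $\psi_0:=\chi(o)^{-1}\in\Iso(g)$, one has $\varphi=\chi(u)^{-1}\to\psi_0$. For the function component, continuity of the right action of $\cD^{s+1}$ on $\Hs(M)$ then gives $\bar f'=\varphi^*(f')\to\psi_0^*(f)=f$, the last equality because $f$ is $\Iso(g)$-invariant; hence $(\bar g',\bar f')\to(g,f)$, so $(\bar g',\bar f')\in\cU^s$ once $(g',f')$ is close enough to $(g,f)$. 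All the remaining steps are formal consequences of Theorem \ref{thm2} and of the properties of the $f$-twisted slice established in \S 2, and the overall scheme is the twisted analogue of the rigidity argument of \cite{Ko1} (see also \cite{Be}, Ch. 12).
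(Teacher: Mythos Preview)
Your proof is correct and follows precisely the route the paper indicates: the paper does not spell out a proof but declares the proposition a ``direct consequence of the notion of pre-moduli spaces and Theorem \ref{thm2}'', and your argument unpacks exactly this --- Theorem \ref{thm2} forces $\cZ^s$ (and hence $\cA_{(g,f)}$) to reduce to the point $(g,f)$ when $Z_{(g,f)}=0$, the slice property brings any nearby soliton into $\cA_{(g,f)}$, and Palais' result upgrades $\cD^{s+1}$-rigidity to sol-rigidity. Your careful handling of the closeness of $(\bar g',\bar f')$ via the $\Iso(g)$-invariance of $f$ is the only point requiring attention beyond the formal structure, and you treat it correctly.
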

Recall  now that an Einstein metric $g$ on $M$ with Einstein constant $c = 1$ corresponds to   a normalised Ricci soliton  $(g, f ) \in \Sol$ with
$f$ constant and equal to  $f = \log\left(\frac{\vol(M,g)}{(2\pi)^{n/2}}\right)$. So, the set $\cE$ of all (normalised) Einstein metrics can be identified with the subset of  $\cP$  given by
\beq\label{einstein} \cE := \left\{\ \left(g, -\log\left(\frac{\vol(M,g)}{(2\pi)^{n/2}}\right)\right) \in \cP\ |  \Ric_{g} - g = 0\ \right\} =\eeq
 $$= \Sol \cap \{(g,u)\in \cP|\, u = \text{constant}\}.$$
For any $(g, f) \in \cE$, we call  {\it space of essential infinitesimal Einstein deformations of $(g,f)$\/}  the subspace of $T_{(g,f)} \cP$
\beq E_{(g,f)} := Z_{(g,f)} \cap \left\{\ (h,a)\in T_{(g,f)}\cP\ |\ da = 0\ \right\}\ .\eeq
Note that it can be naturally identified with the spaces of essential EID  considered by Koiso (\cite{Ko1}, Def. 1.4). \par
\medskip
From Theorem \ref{thm1}, we have that if $(h,a)\in E_{(g,f)}$,  then  $\Tr(h)$ is constant and
\beq \label{traccina}\Tr(\Delta h + 2\cR(h)) = \Delta \Tr(h) + 2 \Tr(h) =  2 \Tr(h) = 0\ .\eeq
 This implies
\beq E_{(g,f)} = \left\{\ (h,0)\in T_{(g,f)}\cP\ |\   \Delta h + 2 \mathcal R(h) = 0 \quad \text{and}\quad  a =  \Tr(h) = 0\  \right\} \cong \eeq
$$\cong \{h\in \cC^\infty(S^2T^*M)|\, \d_g h = 0,\ \Tr(h) = 0,\ \Delta h + 2 \mathcal R(h) = 0\ \},$$
recovering the classical results on deformations of Einstein metrics (see e.g. \cite{BE} Lemma 7.1, \cite{Ko1} Lemma 1.5 or  \cite{Be},  Ch. 12).\par
\bigskip
The following proposition has useful applications
\begin{proposition}\label{Z=ZE} Let $(g,f)\in \cE$. If $2\not\in \mbox{Spec}(-\Delta,\cF(M))$, then
$$Z_{(g,f)} = E_{(g,f)}.$$
\end{proposition}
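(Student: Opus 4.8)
The plan is to read off both subspaces from formulas already proved and then invoke the spectral hypothesis in a single line. By definition $E_{(g,f)} = Z_{(g,f)} \cap \{(h,a)\in T_{(g,f)}\cP : da = 0\}$, so the inclusion $E_{(g,f)}\subseteq Z_{(g,f)}$ is automatic and the entire content of the proposition is to show that \emph{every} $(h,a)\in Z_{(g,f)}$ has $a$ constant. Since $f$ is constant on $\cE$, we have $\nabla f = 0$, and the twisted operators degenerate to $\Delta_f = \Delta$ and $\d_{(g,f)} = \d_g$; hence by Theorem \ref{thm1} (formula \eqref{Z}) an element of $Z_{(g,f)}$ is a pair $(h,a)$ with $\d_g h = 0$, $a = \tfrac12\Tr(h)$ and $\tfrac12 \Delta h + \cR(h) = 0$.

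Next I would set $u := \Tr(h)$ and take the $g$-trace of the tensor equation $\Delta h + 2\cR(h) = 0$. Using the Einstein condition $\Ric_g = g$ exactly as in the computation \eqref{traccina} — where $\Tr(\Delta h) = \Delta(\Tr h)$ because $\DD g = 0$, and $\Tr(\cR(h)) = \Tr(h)$ — this yields $\Delta u + 2u = 0$, that is, $-\Delta u = 2u$. The one non-formal step is now the hypothesis: since $2\notin \mbox{Spec}(-\Delta, \cF(M))$, this equation forces $u \equiv 0$. Hence $\Tr(h) = 0$, and therefore $a = \tfrac12\Tr(h) = 0$; in particular $da = 0$, so $(h,a)\in E_{(g,f)}$. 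Together with the trivial inclusion this gives $Z_{(g,f)} = E_{(g,f)}$.

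I do not expect a genuine obstacle here; the argument is essentially bookkeeping. The two points to be careful about are: (a) that for constant $f$ the operators $\Delta_f$ and $\d_{(g,f)}$ really collapse to $\Delta$ and $\d_g$ (immediate from $\nabla f = 0$ and the definitions); and (b) that the trace identity being invoked is the general identity $\Tr(\Delta h + 2\cR(h)) = \Delta(\Tr h) + 2\Tr(h)$, valid for an arbitrary symmetric $2$-tensor $h$ when $g$ is Einstein with constant $1$ (this is the first equality in \eqref{traccina}, not the further specialization to $E_{(g,f)}$ used there). One may also remark that the ambient constraint $\int_M(\Tr h - 2a)\,e^{-f}\mu_g = 0$ coming from $T_{(g,f)}\cP$ is automatically satisfied on $Z_{(g,f)}$, since there $\Tr h - 2a = 0$ pointwise, so it plays no role in the argument.
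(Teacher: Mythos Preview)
Your argument is correct and is essentially the paper's own proof: take the trace of $\Delta h + 2\cR(h)=0$ for $(h,a)\in Z_{(g,f)}$ (using that $f$ constant collapses $\Delta_f$ to $\Delta$), obtain $-\Delta(\Tr h)=2\Tr h$, and invoke the spectral hypothesis to force $\Tr h=0$, hence $a=0$. Your remarks (a) and (b) are exactly the two points the paper uses implicitly, so nothing is missing.
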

\begin{pf} From \eqref{traccina} and the hypotheses,  for any $(h, a) \in  Z_{(g,f)}$, we have $\Tr(h) = 0$. Since  $E_{(g,f)} = \{(h,a)\in Z_{(g,f)}|\, \Tr(h) = 0\}$, the claim follows.\end{pf}
Next theorem is a remarkable  example of how Proposition \ref{Z=ZE} can be used.\par
\begin{theorem}\label{cross} Let $M = G/H$ be a compact rank one symmetric space and $g_o$  its standard Einstein metric, corresponding to the normalised Ricci soliton $\left(g_o, f_o = \log\left(\frac{\vol(M,g_o)}{(2\pi)^{\frac{n}{2}}}\right)\right)$. \par
\begin{itemize}
\item[a)] If $M \neq \bC P^n$, $n \geq 2$, then  $ (g_o, f_o)$  is sol-rigid.
\item[b)] If $M=\bC P^n = \SU_{n+1}/\mathrm{S}(\U_1\times\U_n)$,  $n \geq 2$, the space of essential infinitesimal Einstein deformations $E_{(g_o, f_o)}$ is trivial, while the space of
essential infinitesimal solitonic deformations  $Z_{(g_o, f_o)}$  is $\SU_{n+1}$-equivariantly isomorphic to  $\su_{n+1}$.
\end{itemize}
\end{theorem}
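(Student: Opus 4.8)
\begin{pf}
\emph{Part (a).} The plan is to reduce each case to one of the two tools already at hand. For $M=S^n$ and $M=\bR P^n$ sol-rigidity is exactly Example~\ref{Ex}; this also settles $S^2=\bC P^1$ and $\bH P^1=S^4$. For the remaining spaces $M=\bH P^n$ ($n\geq 1$) and $M=\bO P^2$ I would prove that $Z_{(g_o,f_o)}=\{0\}$ and then invoke the sol-rigidity criterion stated just before \eqref{einstein}. That $Z_{(g_o,f_o)}=\{0\}$ would follow from Proposition~\ref{Z=ZE} once I know that $E_{(g_o,f_o)}=\{0\}$ and that $2\notin\mathrm{Spec}(-\Delta,\cF(M))$: the former is Koiso's rigidity of compact symmetric spaces (\cite{Ko1}; see also \cite{Be}, Ch.~12), none of which admits infinitesimal Einstein deformations; the latter is a direct inspection of the (well-known) Laplace spectrum of a rank one symmetric space --- scaling so that $\Ric_{g_o}=g_o$, one gets for $\bH P^n$ the eigenvalues $\tfrac{j(j+2n+1)}{n+2}$, $j\geq0$, hence $\lambda_1=\tfrac{2(n+1)}{n+2}<2<\tfrac{2(2n+3)}{n+2}=\lambda_2$, and likewise $\lambda_1<2<\lambda_2$ for $\bO P^2$.

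\emph{Part (b).} Let $M=\bC P^n$ ($n\geq 2$), $g_o$ the Fubini--Study metric normalised by $\Ric_{g_o}=g_o$ and $f_o$ the associated constant. The triviality of $E_{(g_o,f_o)}$ is the classical non-existence of infinitesimal Einstein deformations on $\bC P^n$ (\cite{Ko1},\cite{Be}). For the rest the plan is to study the \emph{trace map} $\tau\colon Z_{(g_o,f_o)}\to\cF(M)$, $\tau(h,a)=\Tr h$. Since $f_o$ is constant, $\d_{f_o}=\d_{g_o}$ and $\Delta_{f_o}=\Delta$, so Theorem~\ref{thm1} gives
$$Z_{(g_o,f_o)}=\bigl\{\,(h,\tfrac12\Tr h)\ :\ \d_{g_o}h=0,\ \Delta h+2\cR(h)=0\,\bigr\}\,,$$
and, taking the trace of $\Delta h+2\cR(h)=0$ (using $\Tr\cR(h)=\Tr h$, which holds because $\Ric_{g_o}=g_o$), one finds $\Delta(\Tr h)+2\Tr h=0$. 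Thus $\tau$ lands in the eigenspace $\cK\=\{u\in\cF(M):\Delta u+2u=0\}$. Because $2$ is precisely the first non-zero eigenvalue of $-\Delta$ for the Fubini--Study metric, $\cK$ is $\SU_{n+1}$-equivariantly isomorphic to the adjoint module $\su_{n+1}$ --- concretely through the moment map $\mu\colon\bC P^n\to\su_{n+1}^{*}\cong\su_{n+1}$, whose components $p\mapsto\langle\mu(p),\xi\rangle$ span $\cK$. Moreover $\ker\tau=\{(h,a)\in Z_{(g_o,f_o)}:\Tr h=0\}=E_{(g_o,f_o)}=\{0\}$, so $\tau$ is an injective, $\Iso(g_o)$-equivariant linear map into $\cK$, and it only remains to prove it is onto.

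Given $u\in\cK$, I would exhibit the explicit preimage
$$h_u\=\frac{1}{2(n-1)}\bigl(\Hess u+u\,g_o\bigr)\,,$$
and check that $(h_u,\tfrac12 u)\in Z_{(g_o,f_o)}$. First, $\Tr h_u=\tfrac1{2(n-1)}(\Delta u+2n\,u)=u$. Second, by the contracted Bianchi--type identity used in the proof of Lemma~\ref{lemma}, $\d_{g_o}(\Hess u)=d(\Delta u)+\Ric_{g_o}(\n u,\cdot)=-2\,du+du=-du$, while $\d_{g_o}(u\,g_o)=du$, so $\d_{g_o}h_u=0$ --- this is the step where the eigenvalue being \emph{exactly} $2$ is essential. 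Third, since $\Delta(u\,g_o)=-2u\,g_o$ and $\cR(g_o)=g_o$, the trace terms cancel and $\Delta h_u+2\cR(h_u)=\tfrac1{2(n-1)}\bigl(\Delta\Hess u+2\cR(\Hess u)\bigr)$, which vanishes by the identity $\Delta\Hess u+2\cR(\Hess u)=0$. For the latter I would use the Lichnerowicz commutation $\Delta_L\circ\delta^{*}=\delta^{*}\circ\Delta_H$, valid on any compact Einstein manifold (with $\delta^{*}$ the formal adjoint of the divergence and $\Delta_H$ the Hodge Laplacian on $1$-forms), together with $\Hess u=\delta^{*}(du)$, $\Delta_H(du)=2\,du$, and the fact that on $(M,g_o)$ one has $\Delta_L=-\Delta-2\cR+2\,\Id$ on symmetric $2$-tensors: these give $\Delta_L\Hess u=2\Hess u$, i.e.\ $\Delta\Hess u+2\cR(\Hess u)=0$. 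Hence $\tau(h_u,\tfrac12 u)=u$ and $\tau$ is onto.

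It then follows that $\tau$ is an $\Iso(g_o)$-equivariant isomorphism $Z_{(g_o,f_o)}\xrightarrow{\sim}\cK\cong\su_{n+1}$, and since $\Iso(g_o)$ acts on $\cK$ through $\SU_{n+1}$ by the adjoint representation, $Z_{(g_o,f_o)}$ is $\SU_{n+1}$-equivariantly isomorphic to $\su_{n+1}$; note in passing that $\Hess u+u\,g_o\not\equiv0$ for $n\geq 2$ (otherwise Obata's theorem would identify $(M,g_o)$ with a round sphere), consistently with $Z_{(g_o,f_o)}\neq\{0\}$. The main obstacle I anticipate is the verification that $h_u$ lies in $Z_{(g_o,f_o)}$, and especially the Weitzenb\"ock identity $\Delta\Hess u+2\cR(\Hess u)=0$; the identification of $\cK$ with the adjoint module is classical, and in part (a) the only non-formal ingredient is the precise location of $2$ in the Laplace spectra of $\bH P^n$ and $\bO P^2$.
\end{pf}
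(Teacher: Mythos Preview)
Your proof is correct. Part (a) is essentially identical to the paper's argument: Example~\ref{Ex} for $S^n$ and $\bR P^n$, and for $\bH P^n$, $\operatorname{Ca}P^2$ the combination of Koiso's $E_{(g_o,f_o)}=0$ with the explicit spectrum check and Proposition~\ref{Z=ZE}. (A small citation correction: the paper takes $E_{(g_o,f_o)}=0$ from \cite{Ko}, not \cite{Ko1}.)

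Part (b) differs genuinely in strategy. The paper first determines the entire kernel of $T=\Delta+2\cR$ on $\cC^\infty(S^2T^*M)$ via representation theory: it observes that the Casimir element acts on $\ker T$ as the identity, deduces that $(\ker T)^\bC\cong m\cdot\gg^\bC$, and computes $m=2$ by Frobenius reciprocity (Lemma~\ref{lemma1}). It then exhibits $\ker T=\Im\psi_1\oplus\Im\psi_2$ with $\psi_1(f)=fg_o$, $\psi_2(f)=\DD df$, and finally intersects with $\ker\d$ to obtain $Z_{(g_o,f_o)}=\{\DD df+\tfrac12 fg_o:f\in F\}$. Your approach bypasses the representation theory entirely: the trace map $\tau$ is injective because $\ker\tau=E_{(g_o,f_o)}=0$, and you produce an explicit right inverse $u\mapsto h_u$. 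This is shorter and self-contained, but the paper's route yields more structural information (the full decomposition of $\ker T$, not just its divergence-free part) and, as the authors note, carries over to other Hermitian symmetric spaces. Both arguments need $\Delta\Hess u+2\cR(\Hess u)=0$ for $u\in\cK$; the paper verifies this by a direct coordinate computation (using only the contracted second Bianchi identity for Einstein metrics, so no appeal to $\DD R=0$), while your Lichnerowicz-commutation argument $\Delta_L\delta^*=\delta^*\Delta_H$ is an equivalent packaging of the same identity.
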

\begin{pf} (a) The sol-rigidity of   $M = S^n$ and $M = \bR P^n$  have been already discussed in  Example \ref{Ex}. For the  cases $M=\bH P^n$ and  $M = \operatorname{Ca}P^2$,
  by the results in  \cite{Ko},  we know that  $E_{(g_o, f_o)} = 0$.  We claim that  $2\not\in \mbox{Spec}(-\Delta,\cF(M))$ and this  immediately implies  (a) by  Proposition \ref{Z=ZE} and Theorem \ref{thm2}. This claim can be checked using  the results in \cite{CW}, where the spectrum of the Laplacian is computed for every compact rank one symmetric space.  However, in that paper, the Laplacian is computed w.r.t. the invariant metric $\overline g$ induced by the Cartan-Killing form of the Lie algebra of isometries. Since the Ricci tensor of such a metric is equal to $\frac{1}{2} g_o$,  the needed check corresponds to verify that $1 \not\in \mbox{Spec}(-\Delta_{\overline g},\cC^\infty(N))$ for $N=\bH P^n$ or $CaP^2$. From \cite{CW},  we have that
$$\mbox{Spec}(-\Delta_{\overline g},\cC^\infty(\bH P^n)) = \left\{ \frac{k^2+k(2n+1)}{2(n+2)}\ |\ k\in \bN\ \right\},$$
$$\mbox{Spec}(-\Delta_{\overline g},\cC^\infty(CaP^2)) = \left\{\ \frac{k^2+11k}{18}\ |\ k\in \bN\ \right\}$$
and the claim follows. \par
\medskip
(b) When $M=\bC P^n = \SU_{n+1}/\mathrm{S}(\U_1\times\U_n)$, from \cite{Ko}, we have that $E_{(g_o, f_o)} = 0$. It remains to determine  $Z_{(g_o,f_o)}$ and this can be done  using standard arguments of  Representation Theory and some other results in \cite{Ko}, as follows. Note  that the same arguments  determine  $Z_{(g_o,f_o)}$  for  other Hermitian symmetric spaces.\par
\smallskip
Let  $G=\SU_{n+1}$, $K= \mathrm{S}(\U_1\times\U_n)$ and  consider the Cartan decomposition $\gg = \gk \oplus\gm$ of $\gg = \su_{n+1}$,  in which $\gm$ is  naturally identified  with the tangent space  $T_{eK} M$ at the origin $ e K \in G/K $.   Denote by $\cB$ the Cartan-Killing form of $\gg$ and by $C$ the Casimir element (w.r.t. $\cB$) of the universal enveloping algebra $U(\gg)$ of $\gg$.  Recall that the $G$-invariant Riemannian metric $\overline g$   with   $\overline g|_{eK} = -B|_{\gm \times \gm}$  is the multiple of the  Fubini-Study metric $g_o$ that satisfies    $\Ric = \frac 12 \overline g$.\par
\smallskip
Since  any $G$-homogeneous vector bundle $\pi: E \longrightarrow M$, with fiber $W = E|_{eK}$,   can be naturally identified with
the bundle  $\wt \pi: G\times_K W \longrightarrow G/K$, we have
$$\cC^\infty(S^2 T^* M) \simeq \cC^\infty(G\times_KS^2(\gm^*)) \simeq \cC^\infty(G,S^2(\gm^*))_K\ ,$$
where
 $$\cC^\infty(G,S^2(\gm^*))_K := \left\{ s:G\longrightarrow S^2(\gm^*)|\ s(xy) = y^{-1}s(x),\ x\in G,\ y\in K\ \right\}\ .$$
 In particular,  $Z_{(g_o, f_o)} \simeq \ker T$, where $T$ is   the operator $T: \cC^\infty(G,S^2(\gm^*))_K \longrightarrow \cC^\infty(G,S^2(\gm^*))_K$,  corresponding to  $ \D + 2 \cR: \cC^\infty(S^2 T^* M) \longrightarrow  \cC^\infty(S^2 T^* M)$.
 \par
 \medskip
 \begin{lemma}\label{lemma1} The  finite-dimensional $G$-module $ \ker T$ is  such that
 \beq\label{U}( \ker T)^\bC\cong 2\cdot \gg^\bC.\eeq
 \end{lemma}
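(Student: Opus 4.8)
The strategy is to diagonalize the operator $T$ using the Peter--Weyl decomposition of $\cC^\infty(G,S^2(\gm^*))_K$ and the fact that, on each isotypic component, $T$ acts through the Casimir element. Concretely, by Frobenius reciprocity,
\[
\cC^\infty(G,S^2(\gm^*))_K^\bC \cong \bigoplus_{\gamma\in\widehat G} V_\gamma \otimes \Hom_K\bigl(V_\gamma, S^2(\gm^*)^\bC\bigr),
\]
and the key identity (valid for the Lichnerowicz-type Laplacian $\Delta+2\cR$ acting on symmetric $2$-tensors over a symmetric space, see \cite{Ko}) is that on the $\gamma$-component the operator $T$ reduces, after the normalisation $\Ric=\frac12\overline g$, to a scalar depending only on the Casimir eigenvalue $\Cas(\gamma)$ of $V_\gamma$ and on the eigenvalues of the curvature term $\cR$ on $S^2(\gm^*)$. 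Thus $\ker T$ is a sum of those $V_\gamma$ for which this scalar vanishes, each appearing with multiplicity $\dim\Hom_K(V_\gamma,S^2(\gm^*)^\bC)$ restricted to the relevant eigenspace of $\cR$.

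First I would recall, from Koiso's analysis of Einstein deformations of $\bC P^n$ (\cite{Ko}), the decomposition of $S^2(\gm^*)^\bC$ as a $K$-module and the action of the curvature operator $\cR$ on its irreducible summands; for the Fubini--Study metric this splits $S^2(\gm^*)$ into the trace part, the primitive $(1,1)$-part, and the real part of $S^{2,0}$, and $\cR$ acts as an explicit scalar on each. Then I would run the eigenvalue bookkeeping: the condition $T|_{V_\gamma}=0$ becomes $\tfrac12\Cas(\gamma) = -(\text{eigenvalue of }\cR\text{ on the matching }K\text{-type})$, with all Casimir eigenvalues taken in the normalisation fixed by $\overline g$. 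The claim $E_{(g_o,f_o)}=0$ from part (b) tells us the trace-free, divergence-free part of $S^2$ contributes nothing, so the surviving solutions must come precisely from the trace component (where the extra Witten-Laplacian/trace terms of Theorem~\ref{thm1} enter, distinguishing the solitonic from the Einstein problem). Matching the Casimir value forces $\gamma$ to be the adjoint representation $\gg^\bC=\su_{n+1}^\bC$; checking the multiplicity $\dim\Hom_K(\gg^\bC,\;\text{trace line}\oplus\cdots)$ gives the factor $2$, yielding $(\ker T)^\bC\cong 2\cdot\gg^\bC$ as in \eqref{U}.

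The main obstacle is the precise eigenvalue computation together with the multiplicity count: one must (i) pin down the Casimir normalisation relating $\cB$, $\overline g$, and $g_o$ so that the curvature eigenvalues of $\cR$ and the Casimir eigenvalues are expressed in a common scale, and (ii) verify that, among all $G$-types $V_\gamma$ meeting $S^2(\gm^*)^\bC$ nontrivially, exactly the adjoint type solves the scalar equation, and does so with multiplicity two. The second point is where a naive count could go wrong: $\su_{n+1}^\bC$ can appear in more than one $K$-isotypic piece of $S^2(\gm^*)^\bC$, and only those pieces on which $\cR$ has the correct eigenvalue contribute to $\ker T$. Establishing that exactly two such embeddings survive — no more, no fewer — is the crux, and it is here that one leans most heavily on the detailed branching data for $\SU_{n+1}\downarrow\mathrm{S}(\U_1\times\U_n)$ and on Koiso's vanishing result $E_{(g_o,f_o)}=0$ to eliminate all competing candidates.
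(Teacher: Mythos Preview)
Your overall framework---Peter--Weyl, Frobenius reciprocity, the role of the Casimir---is on target, but you are missing the key simplification and you misapply $E_{(g_o,f_o)}=0$.

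The paper bypasses any $K$-type-by-$K$-type analysis. By Koiso's identity (\cite{Ko}, Prop.~5.3), on a compact symmetric space the Casimir element acts on $\cC^\infty(G,\bigotimes^p(\gm^*))_K$ as the differential operator $C=-\Delta-2\cR+\tfrac{p}{2}\,\Id$. For $p=2$ this reads $C=-T+\Id$, so $\ker T$ is precisely the eigenspace $\{C=1\}$, and this is a condition on the $G$-type \emph{alone}, independent of which $K$-summand of $S^2(\gm^*)$ one lands in. Since for a compact simple $\gg$ the Casimir equals $1$ on an irreducible complex module exactly when that module is $\gg^\bC$, one obtains $(\ker T)^\bC\cong m\cdot\gg^\bC$ with $m=\dim_\bC\Hom_K(\gg^\bC,S^2(\gm^*)^\bC)$, and a direct branching check gives $m=2$. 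There is no filtering by $\cR$-eigenvalue: \emph{every} $K$-equivariant embedding of $\gg^\bC$ into $S^2(\gm^*)^\bC$ contributes.

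Your invocation of $E_{(g_o,f_o)}=0$ is the genuine gap. That vanishing concerns $\ker T\cap\ker\delta\cap\ker\Tr$, not $\ker T$ itself; it does not say that $\ker T$ is concentrated in the trace line. Indeed, once the lemma is established the paper exhibits $\ker T=\Im\psi_1\oplus\Im\psi_2$ with $\psi_2(f)=\DD df$, and these Hessians are neither pure-trace nor divergence-free. So the mechanism you propose for cutting down the multiplicity---discard $K$-types using $E=0$, then count embeddings into what remains---would not produce the correct count and is in any case unnecessary once Koiso's formula is invoked.
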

 \begin{pf} By  \cite{Ko}, Prop. 5.3, for any $p \geq 1$, the action of the Casimir element $C \in U(\gg)$ on the $G$-module $\cC^\infty(G,\bigotimes^p(\gm^*))_K$
 is identifiable with the differential operator
 $C =  -\Delta - 2\cR + \frac {p}{2} \Id$. This means  that $C|_{\ker T} = \Id_{\ker T}$  and, in particular,  that
   $\ker T$ has no trivial summand.
On the other hand, it is known that,   for any compact simple Lie algebra $\gk$,  the Casimir operator acts as the identity on an irreducible complex $\gk$-module $V$ if and only if  $V \simeq \gk^\bC$  (see e.g. \cite{Ko2}, p.654). Hence,  $(\ker T)^\bC \cong m\cdot \gg^\bC$ for some integer $m$. By Frobenius reciprocity,
$m = \dim_\bC \Hom_K(\gg^\bC,S^2(\gm^*)^\bC)$  and,
by standard arguments of Representation Theory,  we have $m = 2$. \end{pf}
Consider now the Hodge-Laplacian eigenspace
$F =  \{f\in \cF(M) | -\Delta f = f\}$ and recall that, by standard facts on compact K\"ahler-Einstein manifolds (see e.g. \cite{Be}, Ch. 2),
the  map $\imath: \cF(M) \longrightarrow \gX(M)$, $ \imath(f) = J (\n f)$, gives
 a $G$-equivariant isomorphism between $F$   and the Lie algebra  $\gg = \su_{n+1}$  of  Killing vector fields  of  $(M = \bC P^n, g_o)$.  Consider the maps
  $\psi_1,\psi_2:F \longrightarrow \cC^\infty(S^2 T^* M)$  defined by
 $$\psi_1(f) := fg_o\ ,\qquad  \psi_2(f) = \DD df$$
   (see also  \cite{Ko2}, p.659).
We  want to show  that  $\ker T =  \Im \psi_1   \oplus\Im \psi_2$ and  that
$$Z_{(g_o, f_o)}Ê= \ker T  \cap \ker \d =  \left \{ h = \DD df + \frac{1}{2} fg_o\ |\ f\in F\right\} \simeq F =  \su_{n+1}\ ,$$
from which (b) will immediately follow.  We first note that $\Im \psi_1\subset \ker T$ by definitions, while   $\Im \psi_2\subset \ker T$ because of the
following argument. Given a  local orthonormal frame field $\{e_i\}_{i=1,\ldots,n}$ with $\DD_{e_i}e_j|_{x_o}=0$ at a fixed point $x_o\in M$, we have at $x_o$
 $$\Delta (\DD df)_{ij} = \DD_l\DD_l\DD_i\DD_j f = \DD_l[\DD_i\DD_l\DD_jf + R_{lijp}\DD_pf] = $$
 $$= \DD_i\DD_l\DD_j\DD_lf + R_{lilp}\DD_p\DD_jf + R_{lijp}\DD_l\DD_pf + \DD_lR_{lijp}\DD_pf + R_{lijp}\DD_l\DD_pf = $$
$$= \DD_i[\DD_j\DD_l\DD_lf + R_{ljlp}\DD_pf] + 2 R_{lijp}\DD_l\DD_pf + \frac 12\DD_i\DD_jf =$$
$$= 2 R_{lijp}\DD_l\DD_pf = -2\cR(\DD df)_{ij},$$
where $R_{ijkl}=g(\R_{e_i,e_j}e_k,e_l)$.

Secondly,  if $h \in \Im \psi_1 \cap \Im \psi_2 $  (i.e.
$ h = \DD d f_1 = f_2 g_o$ for some $ f_1,f_2\in F$), since
\beq \label{4.7}(\delta h)_j = \DD_i\DD_i\DD_jf_1 = \DD_j\Delta f_1 + R_{ijip}\DD_pf_1 = -\DD_jf_1 + \frac{1}{2} \DD_jf_1 = -\frac{1}{2}
\DD_jf_1\ , \eeq
we have that
$$- d f_1 =  2 \delta h=
2 \delta(f_2 g_o) =  2 d f_2\quad \text{and}\quad  - d f_1 = d(\D f_1) = d(\Tr h) = 2n\, d f_2\ .$$
 If $n \geq 2$,  it follows that $d f_1 = d f_2 = 0$, so that    $f_1=f_2=0$ and $\Im \psi_1 \cap \Im \psi_2 = \{0\}$.
 From Lemma \ref{lemma1} and \eqref{4.7},  we see that  $\ker T =  \Im \psi_1 \oplus \Im \psi_2$ and that  all elements of  $ \ker T \cap \ker \d$ are of the form $h = \DD df + \frac{1}{2} fg_o$ for some $ f\in F$.\end{pf}
\begin{remark}  Theorem \ref{thm2} (b)   shows that  in a neighborhood of  the   standard metric  $g_o$ of $M = \bC P^n$, the moduli space $\Sol/\cD$  has {\it at most} dimension $n = \dim \su_{n+1}/\SU_{n+1} $. A  detailed study of the solitonic pre-moduli space at  the  Fubini-Study metric $g_o$ will be  the content  of  some of our  future investigations.
\end{remark}
\bigskip
The next proposition is another consequence   of  Proposition  \ref{Z=ZE} and   shows that  Einstein metrics with sufficiently large  positive curvatures are sol-rigid.
\begin{proposition} Let ($M,g$) be an $n$-dimensional Einstein metric with $\Ric = g$. If the sectional curvature $K$ is $\d$-pinched (i. e.  $\d \cdot K_{\max}\leq K\leq K_{\max}$ for some $\d \in (0, 1]$) such that
 \beq\label{K} K_{\min} \geq  \frac{1}{n}\qquad \text{and}\qquad  \d > \frac{n-2}{3n}\ , \eeq
 then ($M,g$) is sol-rigid  \end{proposition}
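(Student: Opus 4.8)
The statement to prove is that an $n$-dimensional Einstein manifold $(M,g)$ with $\Ric = g$, whose sectional curvature is $\delta$-pinched with $K_{\min} \geq 1/n$ and $\delta > \frac{n-2}{3n}$, is sol-rigid. In view of Proposition~\ref{Z=ZE} and the Proposition asserting that triviality of $Z_{(g,f)}$ implies sol-rigidity, it suffices to show two things: first, that $2 \notin \mathrm{Spec}(-\Delta, \cF(M))$, so that $Z_{(g,f)} = E_{(g,f)}$; and second, that $E_{(g,f)} = 0$, i.e.\ there is no nonzero transverse traceless symmetric $2$-tensor $h$ with $\d_g h = 0$, $\Tr(h) = 0$, and $\Delta h + 2\cR(h) = 0$. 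Both conclusions should follow from the same Bochner-type / eigenvalue estimates using the pinching hypotheses \eqref{K}.

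\emph{Step 1: eliminating the eigenvalue $2$.} For the first point I would invoke a Lichnerowicz--Obata type lower bound for $\lambda_1(-\Delta)$ on functions. Since $\Ric = g = (n-1)\cdot\frac{1}{n-1}g$, the manifold has Ricci bounded below by $\frac{1}{n-1}g$ only up to rescaling; more precisely Lichnerowicz gives $\lambda_1 \geq \frac{n}{n-1}\,c$ when $\Ric \geq c\,g$, so here $\lambda_1 \geq \frac{n}{n-1} > 1$, and equality holds only on the round sphere of the appropriate radius (Obata). But $\frac{n}{n-1}$ may be less than $2$, so this alone does not exclude the eigenvalue $2$; I would instead argue that if $\lambda = 2$ occurred one could combine the corresponding eigenfunction with the positive-curvature pinching to force a contradiction, or — more likely the intended route — simply observe that for the sphere case (Obata equality) one checks $2 \notin \mathrm{Spec}$ directly from the explicit spectrum, and in all other cases $\lambda_1 \geq \frac{n}{n-1}$ together with a sharper second-eigenvalue estimate under pinching rules out $2$. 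Honestly, the cleanest approach is: the pinching $\delta > \frac{n-2}{3n}$ and $K_{\min}\geq 1/n$ make the manifold diffeomorphic to a sphere (by the differentiable sphere theorem / Ricci-flow pinching results) and of sufficiently controlled geometry that one can quote or re-derive $2 \notin \mathrm{Spec}(-\Delta)$; I would present this as a lemma citing the relevant spectral comparison.

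\emph{Step 2: killing $E_{(g,f)}$ via a Bochner formula.} This is where I expect the real work to be. For a TT-tensor $h$ with $\Delta h + 2\cR(h) = 0$ one has the Weitzenböck-type identity
\[
\tfrac12 \Delta |h|^2 = |\DD h|^2 + \langle \Delta h, h\rangle + \text{(curvature terms)} = |\DD h|^2 - 2\langle \cR(h), h\rangle + \langle \mathring{\R}(h), h\rangle + 2|h|^2,
\]
where the curvature quadratic form splits into the "curvature operator acting on symmetric tensors" piece plus Ricci terms ($\Ric = g$ contributes the $2|h|^2$). Integrating over the compact $M$, the left side vanishes, so one needs the pointwise curvature quadratic form $\langle \cR(h),h\rangle$ and its companions to be controlled by the pinching constants. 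The standard Berger-type estimate says that for a $\delta$-pinched metric with $K \leq K_{\max}$ the curvature operator restricted to traceless symmetric $2$-tensors has eigenvalues bounded below by something like $\big(\frac{3\delta-1}{2}\big)K_{\max}$ (the "$\frac{3\delta-1}{2}$" is exactly the Berger pinching threshold), and bounded above by a multiple of $K_{\max}$; combined with $K_{\min}\geq 1/n$ (which via $\Ric = g$ pins down $K_{\max} \leq \frac{1}{n\delta}$, or rather gives a two-sided control since $\sum$ of sectional curvatures over an orthonormal frame equals $1$), the hypotheses $K_{\min}\geq 1/n$, $\delta > \frac{n-2}{3n}$ should be precisely what is needed to make the integrand $|\DD h|^2 + (\text{strictly positive})\cdot|h|^2 \geq 0$ with equality forcing $h \equiv 0$. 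The main obstacle is getting the curvature algebra exactly right: identifying which Weitzenböck formula to use (for $\Delta_g h + 2\cR(h)$ on TT-tensors, following Koiso / Besse Ch.~12), correctly bounding the curvature term $\langle \mathring{\R}(h),h\rangle$ for a $\delta$-pinched operator, and verifying that $K_{\min}\geq 1/n$ plus $\delta>\frac{n-2}{3n}$ is the sharp combination that makes the resulting coefficient of $|h|^2$ positive. Once $E_{(g,f)} = 0$ and $2 \notin \mathrm{Spec}(-\Delta)$ are both established, Proposition~\ref{Z=ZE} gives $Z_{(g,f)} = E_{(g,f)} = 0$, and then the Proposition relating triviality of $Z_{(g,f)}$ to sol-rigidity finishes the proof.
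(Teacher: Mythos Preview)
Your overall strategy matches the paper's exactly: show (i) $2\notin\mathrm{Spec}(-\Delta,\cF(M))$, (ii) $E_{(g,f)}=0$, then combine Proposition~\ref{Z=ZE} with the proposition that $Z_{(g,f)}=0$ implies sol-rigidity. Your Step~2 is essentially what is proved in \cite{Be}, Cor.~12.72 (the Bochner/Weitzenb\"ock estimate for TT-tensors under $\delta$-pinching with $\delta>\frac{n-2}{3n}$), and the paper simply cites that result rather than redoing the computation.

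The gap is in your Step~1. Lichnerowicz only gives $\lambda_1\geq \frac{n}{n-1}$, which (as you note) does not exclude $2$. Your fallback to a sphere theorem does not work either: the pinching $\delta>\frac{n-2}{3n}$ is below $\tfrac14$ for small $n$ (e.g.\ $\delta>\tfrac19$ for $n=3$, $\delta>\tfrac16$ for $n=4$), so the differentiable sphere theorem is unavailable, and in any case sphere-theorem conclusions about diffeomorphism type say nothing about the spectrum of the given metric. The correct input is Simon's theorem \cite{Si}: for a closed Einstein manifold with $\Ric=g$, the hypothesis $K_{\min}\geq \tfrac1n$ forces $2\notin\mathrm{Spec}(-\Delta,\cF(M))$ unless $(M,g)$ is isometric to the round sphere. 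In the sphere case one does not argue via the spectrum at all; one invokes Example~\ref{Ex} (B\"ohm--Wilking) directly to get sol-rigidity. With Simon's result in hand, the two hypotheses in \eqref{K} play separate and clean roles: $K_{\min}\geq\tfrac1n$ handles the spectral condition, $\delta>\frac{n-2}{3n}$ kills $E_{(g,f)}$.
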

 \begin{proof} By \cite{Si}, the first condition implies that $2 \notin \mbox{Spec}(-\Delta,\cF(M))$, unless $(M,g)$ is isometric to a sphere. The condition on $\d$  implies that $(M,g)$ has no non trivial infinitesimal Einstein deformations (see \cite{Be}, Cor. 12.72, p. 357).  By Proposition \ref{Z=ZE}, the claim follows. \end{proof}

 \bigskip
 \section{Other  rigidity properties of Ricci solitons}
 \setcounter{equation}{0}
\begin{definition} An Einstein metric $g$ is said to be {\it weakly solitonic rigid\/} if there is a neighbourhood $\cU$ of $g$ in $\cM$ such that every Ricci soliton in $\mathcal U$ is Einstein.\end{definition}
The following proposition is a  consequence of a deep recent result in \cite{FLL}.
\begin{proposition} Let $g$ be an Einstein metric with Einstein constant $c > 0$  and   the diameter $d$. If
\beq \label{diam}  d\cdot \sqrt c < 2(\sqrt 2 - 1) \pi\ ,\eeq
then $g$ is weakly solitonic rigid.\end{proposition}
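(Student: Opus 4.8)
The plan is to deduce the statement directly from the cited work \cite{FLL}, which I read as the following a priori rigidity theorem: a \emph{compact} gradient shrinking Ricci soliton $(M,g',f')$, normalised so that $\Ric_{g'}+\Hess_{g'}f'=c'g'$ with $c'>0$ and such that $\operatorname{diam}(M,g')\sqrt{c'}<2(\sqrt2-1)\pi$, is necessarily Einstein (equivalently, $f'$ is constant). All the depth of the proposition sits inside that theorem; on our side there is only a soft globalisation, turning the pointwise conclusion ``this soliton is Einstein'' into a neighbourhood statement. I do not expect a genuine obstacle here: the one point needing (minor) care is to check that the hypotheses of \cite{FLL} are met by every Ricci soliton close to $g$, and in particular that their soliton constant stays positive, which is immediate near a positively curved Einstein metric.

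First I would express the soliton constant of an arbitrary Ricci soliton in a scale‑invariant, metric‑continuous way. Let $g'$ be a Ricci soliton on the compact manifold $M$; it is of gradient type, say $\Ric_{g'}+\Hess_{g'}f'=c'g'$. Tracing this identity gives $s_{g'}+\Delta_{g'}f'=nc'$, and integrating over $M$ against $\mu_{g'}$, so that the Laplacian term drops out, yields
$$c'=\frac{1}{n\,\vol(M,g')}\int_M s_{g'}\,\mu_{g'}\,.$$
The right‑hand side is defined for \emph{every} metric, is invariant under the rescaling $g'\mapsto\lambda g'$ (under which $\operatorname{diam}\mapsto\sqrt\lambda\,\operatorname{diam}$ as well), and depends continuously on $g'$ in the $C^2$‑topology; evaluated at the Einstein metric $g$, where $s_g\equiv nc$, it returns $c$, consistently with the fact that the normalised soliton attached to $g$ has constant $1$ after rescaling.

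Next I would set $\Psi(g')\=\operatorname{diam}(M,g')\cdot\big(\tfrac{1}{n\,\vol(M,g')}\int_M s_{g'}\,\mu_{g'}\big)^{1/2}$ on the open set of metrics with everywhere positive scalar curvature. Since $g'\mapsto\operatorname{diam}(M,g')$ is continuous in the $C^0$‑topology and the averaged scalar curvature is continuous in the $C^2$‑topology, $\Psi$ is continuous near $g$, and $\Psi(g)=d\sqrt c<2(\sqrt2-1)\pi$ by hypothesis. Hence there is a neighbourhood $\cU$ of $g$ in $\cM$, contained in the region where $s_{g'}>0$ on all of $M$, on which $\Psi<2(\sqrt2-1)\pi$. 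If some $g'\in\cU$ happens to be a Ricci soliton, then it is a compact gradient soliton whose constant $c'=\tfrac{1}{n\vol(M,g')}\int_M s_{g'}\mu_{g'}$ is strictly positive (the scalar curvature of $g'$ is positive on $M$), so $g'$ is shrinking, and $\operatorname{diam}(M,g')\sqrt{c'}=\Psi(g')<2(\sqrt2-1)\pi$; by \cite{FLL} it is Einstein. Therefore every Ricci soliton in $\cU$ is Einstein, i.e.\ $g$ is weakly solitonic rigid, as claimed. As a sanity check, the excluded borderline cases are the ones with large diameter relative to $\sqrt c$ (for instance the round sphere, with $d\sqrt c=\pi\sqrt{n-1}$), which are rigid for independent reasons, cf.\ Example \ref{Ex}.
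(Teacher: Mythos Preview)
Your proof is correct and follows essentially the same route as the paper: both invoke the result of \cite{FLL}, define the same continuous functional $g'\mapsto d(g')\cdot\big(\tfrac{1}{n\vol(g')}\int_M s_{g'}\,\mu_{g'}\big)^{1/2}$, and take its sublevel set as the neighbourhood $\cU$. You supply a bit more detail than the paper (the derivation of $c'$ as the averaged scalar curvature, and the positivity check), but the argument is the same.
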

\begin{pf} In \cite{FLL} it is proved that a shrinking Ricci soliton $(g, f)$,  satisfying $\Ric + \DD df - cg = 0$ for some $c >0$,  is Einstein whenever
its diameter $d$ satisfies \eqref{diam}. If $D:\mathcal M\longrightarrow \bR$ is  the continuous map
$$ D(g') =  d(g')\cdot \left(\frac 1{n\operatorname{Vol}(g')}\,\int_M s_{g'}\, \mu_{g'}\right)^{1/2}\ ,$$
where $d(g')$ and $\operatorname{Vol}(g')$ are the diameter and the volume of $g'$, respectively,  then $\cU = D^{-1}((-\infty,2(\sqrt 2 - 1) \pi))$ works in the definition of weak rigidity.\end{pf}
\par
\medskip
 Consider now the case of a compact K\"abler manifold, that is a compact complex manifold $(M, J)$ admitting  a $J$-Hermitian metric $g$  with  closed K\"ahler form $\omega = g(\cdot, J \cdot)$. It is well known that, up to biholomorphisms,  there is   at most one K\"ahler Ricci soliton $g_o$ on $(M, J)$  (\cite{TZ}). On the other hand,   a recent result by Li (\cite{Li}) shows that,   for a given  K\"ahler Ricci soliton  $g_o$  on $(M, J)$ with   $G = \Iso^o(M, g_o)$,  if there exists  a smooth family   $\{J_t\}$  of $G$-invariant  (non biholomorphic) complex structures on $M$ with $J_0 = J$,  then there is also a family of $G$-invariant  Ricci solitons $g_t$  on $M$,  which are  K\"ahler w.r.t. the corresponding complex structures $J_t$.\par
 \par
 \smallskip
 The next proposition  is a generalization of a result  of Koiso (\cite{Ko1}, Thm. 10.5) and deals with weak solitonic rigidity of K\"ahler-Einstein metrics.
 \begin{proposition} Let $(M, J, g)$ be a $2n$-dimensional compact K\"ahler-Einstein manifold with $\Ric = g$. Assume that
$2$ is not an eigenvalue of the Hodge Laplacian $-\Delta$ acting on $\cF(M)$ and on the space of forms of type $(1,1)$. \par
If $H^2(M,\Theta) = 0$, where $\Theta$ is the sheaf of germs of holomorphic vector fields, then every Ricci soliton $g'$ on $M$, which  is sufficiently close to $g$,  is K\"ahler Einstein (w.r.t. a
suitable complex structure $J'$).
\end{proposition}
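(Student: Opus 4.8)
Let $(g,f_o)\in\cE\subset\Sol$ be the normalised Ricci soliton associated with the K\"ahler-Einstein metric $g$, so that $f_o$ is a constant. The plan is to show that, near $(g,f_o)$, the whole solitonic pre-moduli space consists of K\"ahler-Einstein metrics, and then to invoke Theorem~\ref{thm2}. First I would reduce a given Ricci soliton $g'$ close to $g$ to the pre-moduli space: $g'$ is not Ricci flat, so after rescaling by a factor close to $1$ it satisfies $\Ric_{g'}+\DD^{g'}df'-g'=0$ for a suitable $f'$ with $(g',f')\in\cP$, and by the $f_o$-twisted slice theorem of \S 2 there is $\varphi\in\cD$ close to the identity with $(\varphi^{*}g',f')\in\cA_{(g,f_o)}$. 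By Theorem~\ref{thm2}, $\cA_{(g,f_o)}$ is a real-analytic subset of the finite-dimensional real-analytic submanifold $\cZ^s\subset\cP^s$ with $T_{(g,f_o)}\cZ^s=Z_{(g,f_o)}$; and since $2\notin\mathrm{Spec}(-\Delta,\cF(M))$, Proposition~\ref{Z=ZE} gives $Z_{(g,f_o)}=E_{(g,f_o)}$.

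Next I would analyse $E_{(g,f_o)}$ using the classical theory of infinitesimal Einstein deformations of a K\"ahler-Einstein metric with positive Einstein constant (\cite{Ko1}; \cite{Be}, Ch.~12). Every $h\in E_{(g,f_o)}$ splits into a $J$-invariant and a $J$-anti-invariant part, each again in $E_{(g,f_o)}$. The $J$-invariant ones correspond, via the K\"ahler form, to primitive real $(1,1)$-forms lying in the eigenspace of the Hodge Laplacian $-\Delta$ for the eigenvalue $2$; hence, by the second spectral hypothesis, the $J$-invariant part of $E_{(g,f_o)}$ vanishes. The $J$-anti-invariant ones are identified, in the usual way, with harmonic $\bpa$-closed $T^{1,0}M$-valued $(0,1)$-forms, which yields an isomorphism of real vector spaces $E_{(g,f_o)}\cong H^{1}(M,\Theta)$. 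Finally, $2\notin\mathrm{Spec}(-\Delta,\cF(M))$ together with the Lichnerowicz-Matsushima bound (the first nonzero eigenvalue of $-\Delta$ on functions is $\geq 2$, with equality exactly when $M$ carries a nonzero holomorphic vector field) gives $H^{0}(M,\Theta)=0$, so $\Aut(M,J)$ is finite.

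Then I would integrate. Since $H^{2}(M,\Theta)=0$, Kodaira-Spencer-Kuranishi theory provides an unobstructed Kuranishi family $\{(M,J_t)\}_{t\in T}$, with $T\subset H^{1}(M,\Theta)$ a neighbourhood of $0$, $J_0=J$, and Kodaira-Spencer map the identity at $0$. As $H^{0}(M,\Theta)=0$, the structures $J_t$ carry no holomorphic vector fields and the K\"ahler-Einstein condition is open in $t$ (Koiso's deformation theorem for Fano K\"ahler-Einstein metrics; cf. \cite{Be}, Ch.~12, and \cite{Li}): for small $t$ there is a unique metric $g_t$, K\"ahler for $J_t$ with $\Ric_{g_t}=g_t$, depending real-analytically on $t$, with $g_0=g$. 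Attaching the constants $f_t$ (so that $(g_t,f_t)\in\cE$) and composing with slicing diffeomorphisms $\psi_t$ (close to the identity, smoothly in $t$, with $\psi_t^{*}g_t\in\cS^s_{f_o}$) produces, for $T$ small enough, an embedded real-analytic submanifold $N=\{(\psi_t^{*}g_t,f_t):t\in T\}\subseteq\cA_{(g,f_o)}\subseteq\cZ^s$ whose tangent space at $(g,f_o)$ is $E_{(g,f_o)}=T_{(g,f_o)}\cZ^s$; here one uses the Kodaira-Spencer isomorphism and the fact that distinct $J_t$'s cannot carry isometric normalised solitons close to $g$. Hence $N$ is open in $\cZ^s$, so near $(g,f_o)$ one has $N=\cZ^s=\cA_{(g,f_o)}$: every member of the pre-moduli space is K\"ahler-Einstein, and in particular $(\varphi^{*}g',f')$, hence $g'$, is K\"ahler-Einstein with respect to a suitable complex structure $J'$.

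The hard part will be the second step: translating the tensorial equation $\Delta h+2\cR(h)=0$, restricted to the $J$-invariant part, into the assertion that the associated primitive $(1,1)$-form is a Hodge eigenform for the eigenvalue $2$ --- exactly the normalisation under which the hypothesis on $\mathrm{Spec}(-\Delta)$ on $(1,1)$-forms kills these deformations, keeping in mind that the operator in Theorem~\ref{thm1} is $\tfrac12\Delta_f+\cR$ rather than the Hodge Laplacian --- and then the matching in the third step, namely checking that Koiso's family, once transported into the $f_o$-twisted slice, is a genuine submanifold of $\cZ^s$ of the correct dimension and tangent space. Both rely on the finiteness and tangent-space assertions of Theorem~\ref{thm2}, on the smooth dependence on parameters in Ebin's slice construction, and on Koiso's implicit-function-theorem argument.
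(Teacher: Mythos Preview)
Your proposal is correct and follows essentially the same route as the paper: use the hypothesis $2\notin\mathrm{Spec}(-\Delta,\cF(M))$ with Proposition~\ref{Z=ZE} to get $Z_{(g,f_o)}=E_{(g,f_o)}$, use the $(1,1)$-spectrum hypothesis to kill the $J$-Hermitian essential Einstein deformations, and then produce a real-analytic submanifold of K\"ahler--Einstein metrics inside $\cZ^s$ with tangent space $E_{(g,f_o)}=T_{(g,f_o)}\cZ^s$, forcing it to coincide locally with $\cZ^s$ and hence with the pre-moduli space. The only difference is packaging: the paper invokes Koiso's Theorem~10.5 in \cite{Ko1} (and its proof) directly to obtain the Einstein submanifold $\cE^s$ with $T_{(g,f_o)}\cE^s=E_{(g,f_o)}$, whereas you rebuild that step by hand via the Kuranishi family and openness of the K\"ahler--Einstein condition; one small slip is that after pulling back by $\varphi$ the soliton potential should be $\varphi^{*}f'$, not $f'$.
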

\begin{proof} By  \cite{Ko1}, Thm. 10.5, if $(M, J, g)$ is a  compact  Fano K\"ahler Einstein manifold,  any Einstein metric $g'$,  which is sufficiently close to $g$,  is K\"ahler with respect to some complex structure $J'$, provided the following conditions are satisfied:
\begin{itemize}
\item[a)] the  complex structure $J$ belongs to a non-singular complete family of complex structures,
\item[b)] there are no non-trivial holomorphic vector fields on $M$ and
\item[c)] any essential infinitesimal Einstein deformation,  which is $J$-Hermitian,  is necessarily trivial.
\end{itemize}
If this is the case,  denoting by $(g,f = const.)\in \cP^s$, $s\geq [n/2]+3$,   the  pair in $\cP^s$ corresponding to   $g$, the proof of \cite{Ko1}, Thm. 10.5, shows that
the set of  Einstein metric belonging to a sufficiently small Ebin's slice $\cS_{(g,f)} \subset \cM^s$  fills a real analytic submanifold $\cE^s \subset \cS_{(g,f)}$ with   $ E_{(g,f)} = T_{(g, f)} \cE^s$.\par
\medskip
We now observe that    $H^2(M,\Theta) = 0$ implies  (a), while    the assumption  $2\not\in \mbox{Spec}(-\Delta,\cF(M))$ implies (b) and the equality $Z_{(g,f)}Ê= E_{(g,f)}$, which are consequences of the Lichnerowicz Theorem (\cite{Be}, p. 90)  and Proposition \ref{Z=ZE}, respectively.  Moreover, if $h$  is an essential infinitesimal Einstein deformation (hence,   $\d h= 0 $ and $\Delta h + 2 \cR(h) = 0 $),   the   $(1,1)$-form $\psi := h\circ J$  is coclosed and is such that  $\Delta_H\psi = 2\psi$, where  $\Delta_H$ is the Hodge Laplacian of $(M, J, g)$ (see e.g. \cite{Be}, p. 362). Hence,  if $2\not\in \mbox{Spec}(-\Delta,\Omega^{1,1}(M))$,  also  (c) is satisfied and the quoted  result by  Koiso  applies.  \par
Moreover,
by Theorem \ref{thm2}, the real analytic submanifold $\cE^s$ is  contained in  $\cZ^s$ and,  since
$T_{(g,f)} \cZ^s = Z_{(g,f)}Ê= E_{(g,f)} = T_{(g,f)} \cE^s$, it follows  that $\cE^s \cap \cU= \cZ^s \cap \cU$ for any sufficiently small neighbourhood $\cU$ of $(g,f)$. This implies the claim.  \end{proof}
\begin{remark} The blow-up of the complex projective plane  at $\nu$ generic points,  with $\nu\geq 5$,  is  a compact K\"ahler Einstein manifold (\cite{Ti}) with  no non trivial holomorphic vector fields  and   $H^2(M,\Theta) = 0$ (see e.g.  \cite{Kod}). We do not know whether it also satisfies the above condition on the spectra of the Laplacian.\end{remark}

\bigskip\bigskip
\font\smallsmc = cmcsc8
\font\smalltt = cmtt8
\font\smallit = cmti8
\hbox{\parindent=0pt\parskip=0pt
\vbox{\baselineskip 9.5 pt \hsize=3.1truein
\obeylines
{\smallsmc
Fabio Podest\`a
Dip. di Matematica "U.Dini"
Universit\`a di Firenze
Viale Morgagni 67/A
I-50134 Firenze
ITALY}

\medskip
{\smallit E-mail}\/: {\smalltt podesta@math.unifi.it
}
}
\hskip 0.0truecm
\vbox{\baselineskip 9.5 pt \hsize=3.7truein
\obeylines
{\smallsmc
Andrea Spiro
Scuola di Scienze e Tecnologie
Universit\`a di Camerino
Via Madonna delle Carceri
I-62032 Camerino (Macerata)
ITALY
}\medskip
{\smallit E-mail}\/: {\smalltt andrea.spiro@unicam.it}
}
}

\end{document}